\newtheorem{theorem}{Theorem}[section]
\newtheorem{corollary}{Corollary}[theorem]
\newtheorem{lemma}[theorem]{Lemma}
\newtheorem{assu}[theorem]{Assumption}
\theoremstyle{definition}
\theoremstyle{proposition}
\newtheorem{proposition}[theorem]{Proposition}
\newtheorem{remark}[theorem]{Remark}
\numberwithin{equation}{section}
\begin{document}
   \title[Tear-off versus global existence for an age-structured model of adhesion]
   {Tear-off versus global existence for a structured model of adhesion mediated by transient elastic linkages.
 }


\author{Vuk Mili\v si\' c}
 \address{Laboratoire Analyse, G{\'e}om{\'e}trie \& Applications (LAGA),
Universit{\'e} Paris 13,
FRANCE}
\email{milisic@math.univ-paris13.fr}
\thanks{The first author was granted by Campus France ({\tt www.campusfrance.org/}) in the framework of the project 27238 TD}

\author{Dietmar Oelz} 
\address{Courant Institute, New York University}
\email{dietmar@cims.nyu.edu}
\thanks{ This study has been supported by the Wolfgang Pauli Institute (Vienna) and by the Vienna Science and Technology Fund (WWTF) through its project MA09-004; furthermore through the Austrian Agency for International Cooperation in Education 
and Research (OeAD) through its project FR 08/2012 and by the Schr\"odinger Fellowship J3463-N25 of the Austrian Science Fund (FWF).}

\date{\today}

\keywords{friction coefficient, protein linkages,  cell adhesion,  renewal equation,  effect of chemical bonds,  integral equation,  Volterra kernel.
}
\subjclass{35Q92, 35B40, 45D05.}

\begin{abstract}
We consider a microscopic model for friction mediated by transient elastic linkages introduced
in \cite{MiOel.1,MiOel.2}. In the present study we prove existence and uniqueness of a solution to
the coupled system under weaker hypotheses. The theory we present covers the case where the off-rate
of linkages is unbounded but increasing at most linearly with respect to the mechanical load.

The time of existence is typically bounded and corresponds to tear-off where the moving binding site does not have any
bonds with the substrate. However, under additional assumptions on the external force we prove global in time 
existence of a solution that consequently stays attached to the substrate.
\end{abstract}

\maketitle

\setcounter{tocdepth}{1}
\tableofcontents

\section{Introduction}

Adhesion forces at the cellular and intra-cellular scales play an important role 
in several  phenomenons  as cell motility (see \cite{OeSchVi} and references therein), 
or cancer growth \cite{PreVi}. In \cite{OeSchVi,OeSch} the authors derive a complete
model for a moving network of actin filaments  polymerizing near the boundary of the cell 
and depolymerizing close to the nucleus, providing biologically plausible steady-state configurations 
of the cell shape. 
The main advantage of this method is that the parameters used are experimentally 
easy to obtain if not already available in the literature \cite{pmid18278037,pmid8432732,pmid8282102,pmid12547805,pmid1093925}.
The adhesion and the stretching between filaments are written as friction
terms obtained through a formal  limits of a delayed system of equations.
Indeed, let $\e$ be a dimensionless parameter denoting  the
ratio of the typical lifetime of bonds vs. the overall timescale of the model.
The asymptotic limit is obtained assuming that the   rate of linkage turnover becomes great 
as well as  the stiffness of the bonds (typically as $O(1/\e)$).
The rigorous justification of the limit $\e\to 0$ is the ultimate goal of our investigations \cite{MiOel.1,MiOel.2}.
Nevertheless the highly non-linear nature of the delayed model leads  to
consider    already the case of  a fixed value of $\e$.  
In this article we show that, even then, the data of the problem
determins the well-posedness of the model :
the balance between the on-rate of the linkages and the force exerted on the adhesion point
is essential.  Mathematically this is seen since, according to this balance, either we can show 
blow up in finite time  or global existence. Physically this means that
pulling too strong the adhesion site causes a tear-off, and that our model is able 
to reproduce this feature. 
Experimentally this  is observed and used in order to measure the probability distribution of  unbinding forces  \cite{Baumgartner11042000,canetta:inserm-00144609,pmid16183875,hanley2003single}.

More precisely, this study is concerned with a system of equations which describe
the evolution of the time-dependent position of a single binding site as it moves on a 1D-substrate.
External forces $f$ act on the moving point-object while it is attached to the substrate through continuously remodeling
elastic linkages which represent the effect of transiently attaching protein bonds. Their age distribution is denoted by $\rhoe=\rhoe(t,a)$ where
$a \geq 0$ denotes the age of linkages and $t \geq 0$ denotes time. 
Here we treat $\e$ as a fixed constant, which we keep in our notation to maintain consistency with previous studies, in which we were analyzing the convergence with respect to $\varepsilon$ \cite{MiOel.1,MiOel.2}. 

In \cite{OeSchVi, OeSch} the following structured model for the turnover of protein bonds with
age distribution $\rhoe=\rhoe(t,a)$ for age $a \geq 0$ has been established,
\begin{equation}
\label{eq.rho.eps} 
\left\{ 
\begin{aligned} 
&\varepsilon \partial_t \rhoe + \partial_a \rhoe+ \zteps \, \rhoe = 0 
\,, &t>0 \, , \;a > 0 \; , \\ 
& \rhoe(t,a=0)=\beps(t)\left(1-\muze  \right) 
\, , & t > 0 \; , \\
& \rhoe(t=0,a)=\rhoi(a)
\, ,&a\geq 0  \; , 
\end{aligned}  
\right. 
\end{equation}
where $\muze(t):=\int_0^\infty \rhoe(t,\tilde a) \, d \tia$ and the on-rate of bonds is a given
coefficient $\beps$ times a factor, that takes into account saturation of the moving binding site with linkages.
This system is coupled to the  elongation $\veps=\veps(t,a)$ of the linkage through 
\begin{equation}
\label{def_full_zteps}
\zteps:=\xi(|\veps(t,a)|) \; ,
\end{equation} 
according to which the off-rate $\xi(u) > 0$ is a real, positive function of the elongation of the linkage $\veps$.
In \cite{MiOel.2} we introduced the following age-structured model for the evolution of the elongation $\veps$,
\begin{equation}
\label{eq.veps}
\left\{ 
\begin{aligned}
& \e \dt \veps + \da \veps = \frac{1}{\muze} \left(  \e \dt f + \int_0^\infty \left(\zeta (\veps) \veps \rhoe\right)(t,\tia) \; d\tia \right) \,, &t>0 \, , \;a > 0 \; , \\
& \veps(t,a=0)= 0\,, &t>0  \; , \\
& \veps(t=0,a)=\vepsi(a)\,, &a \geq 0  \; ,
\end{aligned}
\right.
\end{equation}
where $f=f(t) \in \mathbb{R}$ is the external force acting on the binding site.
In \cite{MiOel.2} we had shown that the system \eqref{eq.veps} is equivalent to an integral equation for
the position of the binding site $\zeps=\zeps(t)$ itself \cite{MiOel.1}, namely
\begin{equation}
\label{eq.z.eps} 
\left\{ 
\begin{aligned} 
& \frac{1}{\varepsilon} \int_0^\infty \left(z_\varepsilon(t)-z_\varepsilon(t-\varepsilon a)\right) \rho_\varepsilon(t,a) \; da = f(t) \; , \quad & t\geq 0 \; , \\
& z_\varepsilon(t)=\zp(t) \; , \quad  &t < 0 \; ,
\end{aligned}  
\right. 
\end{equation}
where the known past positions are given by the Lipschitz function $\zp(t) \in \mathbb R$ for $t<0$. 
The correspondance between \eqref{eq.z.eps} and \eqref{eq.veps} is made through 
\begin{displaymath}
 \veps=\frac{\zeps(t)-\zeps(t-\varepsilon a)}{\varepsilon} \; .
\end{displaymath}
Note that the equation \eqref{eq.z.eps} has been the original result of the mathematical modeling since it represents the balance of external and elatic forces acting on the binding site. On the other hand, in \cite{MiOel.2} it has turned out to be beneficial to work on 
the system \eqref{eq.rho.eps}, \eqref{eq.veps} instead, since it allowed to derive powerful a-priori estimates on $\veps$.

The analysis in the older studies \cite{MiOel.1} and \cite{MiOel.2} relied on the existence of an upper bound $\ztmax$ of the function $\zeta$. In \cite{MiOel.2}, for fixed $\varepsilon$,  we prove existence and uniqueness of 
weak solutions of the coupled system \eqref{eq.rho.eps}, \eqref{def_full_zteps}, \eqref{eq.veps}.
In the simpler semi-coupled case when $\zteps(t,a)$ is a given bounded
function we give as well a convergence result as $\e \to 0$.
The analysis of \eqref{eq.rho.eps} in \cite{MiOel.1} relied on the fact that 
$ \ddt{} \, \cH_0[\rho-\rho_\infty]\leq-\ztmin/\varepsilon \cH_0[\rho-\rho_\infty]$ where
\begin{equation}
\label{equ_Hfunctional}
 \cH_0[\rho](t) := \int_{\rr} | \rho(t,a) | da + \left| \int_{\rr} \rho(t,a) da \right|,
\quad
\end{equation}
and where $\rho_\infty$ is the stationary solution of \eqref{eq.rho.eps}.
For the analysis of \eqref{eq.veps} in \cite{MiOel.2} we established the a-priori estimate 
(Lemma 4.1,  \cite{MiOel.2})
\begin{equation}
\label{equ.stab.rho.u}
\int_{\rr} \rhoe (t,a) | \veps(t,a) | da \leq 
 \int_{\rr} \rhoi(a) | \vepsi(a) | da + \int_0^t | \dt f(\tit)  | d\tit \; .  
\end{equation}
Note that both results, decay of the functional \eqref{equ_Hfunctional} and the {\em a-priori} estimate \eqref{equ.stab.rho.u}, do not rely on the existence of an upper bound of $\zeta$ and therefore do hold in the framework of this paper.

\bigskip 

It is the aim of the present study to relax the hypothesis of boundedness of $\zteps$.
This represents a major improvement of the analysis, because the lower bound of the total mass $\muze(t)$ strongly depends on $\ztmax$ and the analytical arguments in \cite{MiOel.2} do rely heavily on this control. 
Furthermore the upper bound $\ztmax$ had major importance in the fixed point argument used in \cite{MiOel.2}
to prove the global existence result since we used it to control the non-linear right hand side in \eqref{eq.veps}.

In addition to deepening the analysis, unboundedness of the off-rate is the natural scenario from the application point of view. A typical situation is Bell's law, i.e. an exponential increase of the off-rate 
as the elastic linker is extended, $\zteps=\zeta_0 \exp(|\veps|)$ (cf. \cite{Suda_2001, Li2003}). However, this strongly non-linear scenario is still out of reach of the rigorous mathematical analysis that we present in this study which relies on
$\zeta$ being a (globally) Lipschitz continous function as it's main technical assumption.
The right hand side of \eqref{eq.veps} for a given function $u$,
$$
g_u(t)  := \frac{1}{\mu_{0,u}} \left\{ \e \dt f  + \int_{\rr} \zeta(u(t,a)) \, \rhow_u(t,a) \, u(t,a) \, da \right\} \; ,
$$ 
where $\rhow_u$ solves \eqref{eq.rho.eps} with $\zeta=\zeta(u)$ and $\mu_{0,u} := \int_{\rr} \rhow_u(t,a) \, da$,
can become infinite if either $\mu_{0,u}$ vanishes or $\int_{\rr} \zeta(u) \, u \,\rhow_u \, da$ blows up.
We define the modified right hand side
$$
\tig_u := 
\frac{1}{\max(\mu_{0,u},\umu)} 
\left\{ \e \dt f + \max\left( - \ovp, \min\left(\ovp,\int_{\rr} \zeta(u) \, \rhow_u \, u \,  da \right) \right) \right\} \, ,
$$
where $\umu$ and $\ovp$ are two strictly positive arbitrary constants.
The strategy to prove our existence result is 
first to establish existence and uniqueness of a solution of this modified problem using a fixed point argument in the space
\begin{equation}
\label{def_XT_sharp}
X_T := \left\{ u \in L^\infty_{\loc}((0,T)\times\rr) \st \sup_{t \in  (0,T)} \left\| u(t,a) \, \omega(a)   \right\|_{L^\infty_a} < \infty \right\}
\end{equation}
defined for any specific time $T>0$, where the weight function is
\begin{equation}
\label{def_omega}
\omega(a):=\frac{1}{1+a} \; .
\end{equation}
To this end we introduce the map $\Phi: v \in X_T \mapsto u \in X_T$ where, given $v$, we solve \eqref{eq.rho.eps} with  $\zeta=\zeta(v)$ and  obtain the age distribution $\rho_v$.
Then we look for the solution of the problem : 
\begin{equation}
\label{eq.wueps}
\left\{
\begin{aligned}
& \e \dt u + \da u = \tig_v (t) \; ,& t>0 \; ,\quad a>0\; , \\
& u(t,0)=0\;, &t>0 \; , \\
& u(0,a)=\vepsi(a)\;, &a\geq 0 \; ,
\end{aligned}
\right.
\end{equation}
to obtain $u \in X_T$. 
The right hand side of \eqref{eq.wueps} becomes a bounded function whose bounds 
depend on  the cut-offs $\umu$ and $\ovp$. This allows to prove contraction of the map $\Phi$ on
a time interval that is sufficiently small. Due to the uniform bounds this process can be iterated to obtain
$(\rhow_w,w)$ a unique, global in time, solution.
Then we establish a uniform bound on $p(t):=\int_{\rr} \zeta(w) w \rho_w da$, the second integral term in $g_w$.
This shows that for $\ovp$
sufficiently large with respect to $1/\umu$, $p(t)$ never reaches $\ovp$ so that the solution 
$(\rho_w,w)$ satisfies also a simple-cut-of problem where $\tig_u$ can be replaced by $\titig_u$
defined as~:
 $$
 \titig_u := 
\frac{1}{\max(\mu_{0,u},\umu)} 
\left\{ \e \dt f +\int_{\rr} \zeta(u) \, \rhow_u \, u \,  da \right\} \, .
$$

In a second step, we prove that if additional assumptions hold, this solution never reaches 
the cut-off value $\umu$. 
Otherwise, we give a lower bound to the time span during which the cut-off is not reached. 
In both cases the solution of the modified problem is also the unique solution to the original 
system \eqref{eq.rho.eps}-\eqref{eq.veps} either globally in time or on the finite interval of time.

More precisely, in Section \ref{sec.intermediate}, we analyze the dependence of the lower 
bound of $\mu_{0,u}$ with respect to the $L^\infty(0,T)$ norm of $\titig_u$.
This naturally leads to local existence results for the original 
problem \eqref{eq.rho.eps}-\eqref{eq.veps} in Section \ref{sec.ex.local} by 
providing a minimal time for which 
the solution $(\rho_w,w)$  does not reach the cut-off value $\umu$.
 
Even stronger results are rigorously obtained in Sections \ref{sec.spec.data} and  \ref{sec.blow.up} 
generalising a straightforward computation in the special case where
$\zeta(u)=1+|u|$ and assuming that $\veps$ remains strictly positive. 
In this case, integrating \eqref{eq.rho.eps} in age,  and using the fact that \eqref{eq.z.eps} transforms in $\int_{\rr} \rhoe(t,a) \veps(t,a) da = f(t)$, we obtain that
$$
\e \dt \muze - \beta (1-\muze) + \muze + f = 0 \; ,
$$
which can be solved directly. This provides immediately the bounds
$$
\min \left( \muze(0), \frac{\bmin - \fmax}{\bmax+1} \right) 
\leq \muze(t) \leq \muze(0) \left(1-\frac{t}{t_0}\right), \; 
$$
where
\begin{displaymath}
 t_0
:= \frac{\e}{\bmin+1} \ln\left( 1 + \frac{\muze(0)(\bmin+1)}{\fmin-\bmax}\right)
\end{displaymath}
and leads to a strictly positive lower bound of $\mu_0$ when $\bmin>\fmax$,
whereas if $\fmin >\bmax$, the time $t_0$ is well defined and the binding site tears off, i.e. $\muze(t)$ becomes zero, at $t=t_0$.
These basic ideas provide global existence results (Section~\ref{sec.spec.data}) vs. tear-off results (Section~\ref{sec.blow.up}) under more general assumptions on $\zeta$.

\section{Technical assumptions, preliminary results and a-priori estimates}
\label{sec_prel_apriori}

\subsection{Hypotheses}
 
\begin{assu}\label{ass.zeta}
\begin{enumerate}[a)]
\item There exists a minimal value $\ztmin$  s.t.  $\zeta(u)\geq \ztmin>0$, $\forall u \in \RR$.
\item The derivative of $\zeta$ is bounded {\em i.e. }$\lvert \zeta'(u)\rvert\leq \cz$, $\forall u \in \RR$.
\item The function $f$ is Lipschitz continuous on $[0,T]$ for any positive fixed $T$.
\end{enumerate}
\end{assu}

\begin{remark}
Note that with this definition we do not allow more than a linear growth for $\zeta$. But in contrast to \cite{MiOel.1,MiOel.2}
one has no hypothesis concerning boundedness on $\zteps$.  
\end{remark}

\noindent As in \cite{MiOel.2} we assume also some hypotheses on the initial and boundary data of \eqref{eq.rho.eps}:
\begin{assu}\label{hypo.data.deux}
  The initial condition $\rhoi \in L^\infty_a(\rr)$ is
\begin{enumerate}[(i)]
\item nonnegative, i.e. 
$
\rhoi(a)\geq0 \; ,\quad \text{ a.e. in }  \rr\; .
$
\item Moreover,  the total initial population satisfies
$$
0< \int_0^\infty \rhoi(a) da < 1 \; ,
$$
\item and higher moments are bounded,
$$
0< \int_0^\infty a^p \rhoi (a) \; da \leq  c_p \; ,\quad \text{for } p=1,2   \; ,
$$
where $c_p$ are positive constants depending only on $p$.
\end{enumerate}
\end{assu}
\begin{assu}\label{ass.beta}
For $\beps$ we assume that
\begin{enumerate}[a)]
\item $\beps=\beps(t)$ is a continuous function,  
\item $0< \bmin \leq \beps(t) \leq \bmax$ for all positive times $t$.
\end{enumerate}
\end{assu}
We detail hereafter those results from \cite{MiOel.1} that are still valid in the weaker frame of Assumptions 
\ref{ass.zeta}, \ref{hypo.data.deux} and \ref{ass.beta}.
\begin{theorem} \label{prop.rho.exist} 
We suppose that $u$ is a given function in $X_T$.
Let  Assumptions~\ref{ass.zeta}, \ref{hypo.data.deux} and  \ref{ass.beta}  hold, 
then for every fixed $\varepsilon$ there exists a unique solution 
$\rhow\in C^0(\rr; L^1(\rr)) \cap L^\infty(\rr^2)$ of the problem \eqref{eq.rho.eps}, with the off-rate
$\zeta(t,a):=\zeta(u(t,a))$. 
It satisfies 
\eqref{eq.rho.eps} in the sense of characteristics, namely
\begin{equation}
\label{rho_model_by_characteristics}
\rhow(t,a)=\begin{cases}
\beps(t-\varepsilon a)
 \left(1- \int_0^\infty \rhow(\tilde a,t-\varepsilon a) \, d\tilde a\right) \times&\\
\hspace{2cm}  \times \exp \left(-\int_0^{a}  \zteps(\tilde a, t- \varepsilon (a-\tilde a))\; d \tilde a  \right) \; ,  
& \text{ when } a < t/\varepsilon  \; ,  \\
\rhoi(a-t/\varepsilon) \exp \left(-\frac{1}{\varepsilon} \int_0^{t} \zteps((\tilde t-t )/\varepsilon + a, \tilde t)  \; d \tilde t   \right)  \; ,  &\text{ if }a \geq t/\varepsilon \; .         
        \end{cases}
\end{equation}
\end{theorem}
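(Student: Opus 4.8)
The plan is to follow \cite{MiOel.1} and recast \eqref{eq.rho.eps} --- with the \emph{given} off-rate $\zeta(t,a)=\zeta(u(t,a))$ --- as a scalar linear Volterra equation of the second kind for the total mass $\muze(t)=\int_{\rr}\rhow(t,a)\,da$, and then to check that every step survives the loss of an upper bound on $\zeta$. First I would integrate the characteristic representation \eqref{rho_model_by_characteristics} over $a\in\rr$, splitting at the front $a=t/\varepsilon$; in the region $a<t/\varepsilon$ the change of variables $t_0=t-\varepsilon a$ turns that contribution into a causal convolution, yielding
\[
\muze(t)=G(t)-\frac1\varepsilon\int_0^t \beps(t_0)\,K(t,t_0)\,\muze(t_0)\,dt_0,
\]
where $K(t,t_0)=\exp\!\bigl(-\int_0^{(t-t_0)/\varepsilon}\zeta(\tilde a,t_0+\varepsilon\tilde a)\,d\tilde a\bigr)\in(0,1]$ is the survival factor appearing in \eqref{rho_model_by_characteristics}, and the known term $G$ collects the transported initial datum (the $a\ge t/\varepsilon$ part, which does not involve $\muze$) together with the explicit kernel contribution coming from the ``$1$'' in $1-\muze$; both $K$ and $G$ are built only from $\rhoi$, $\beps$ and $\zeta(u)$.

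The decisive point where the weaker hypotheses enter is the well-posedness of these objects. Since $u\in X_T$ satisfies $|u(t,a)|\le C_T(1+a)$ with $C_T=\sup_{t\in(0,T)}\|u(t,a)\,\omega(a)\|_{L^\infty_a}$ by \eqref{def_XT_sharp}--\eqref{def_omega}, Assumption~\ref{ass.zeta} gives $0<\ztmin\le\zeta(u(t,a))\le\zeta(0)+\cz C_T(1+a)$; hence along any characteristic segment contained in $[0,T]$ --- on which $a$ ranges over a bounded interval --- the integrals defining $K$ and the exponential in $G$ are finite, so $K$ and $G$ are well defined, jointly measurable, and (because the primitive of $\zeta(u)$ along characteristics is absolutely continuous in $t$, and $\rhoi\in L^1(\rr)$ by Assumption~\ref{hypo.data.deux}) continuous in $t$ by dominated convergence, the continuity in $t$ also using Assumption~\ref{ass.beta}. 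The Volterra kernel is bounded by $\bmax/\varepsilon$, so a Picard/Neumann-series argument on $C^0([0,T])$ --- contraction on a short interval and iteration, the iterated operators having norm $O\bigl((\bmax T/\varepsilon)^n/n!\bigr)$ --- produces, for every fixed $\varepsilon$ and every $T>0$, a unique continuous $\muze$. As in \cite{MiOel.1} one then records that $\rhoi\ge0$ and $0<\int\rhoi<1$ propagate to $\rhow\ge0$ and $\muze\in[0,1)$; none of this used $\ztmax$.

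With $\muze$ fixed I would \emph{define} $\rhow$ by \eqref{rho_model_by_characteristics} and verify the claimed regularity: the explicit formula, the bounds $0<\beps\le\bmax$, $0\le1-\muze\le1$ and $\|\rhoi\|_{L^\infty}$, together with $K\le1$, give $\rhow\in L^\infty(\rr^2)$; $L^1$-in-$a$ integrability and $L^1$-continuity in $t$ follow from continuity of $\muze$ and $\beps$ and from continuity of translations in $L^1$ applied to the $\rhoi$-part, the interface at $a=t/\varepsilon$ being an $L^1$-null set, so $\rhow\in C^0(\rr;L^1(\rr))$; differentiating along characteristics shows \eqref{eq.rho.eps} holds in the stated sense, the boundary value $\rhow(t,0)=\beps(t)(1-\muze(t))$ being exactly the Volterra relation. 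Uniqueness is the $\cH_0$-estimate of \cite{MiOel.1}: if $\rhow_1,\rhow_2$ solve the problem with the same data, then $w:=\rhow_1-\rhow_2$ solves the \emph{linear} equation $\varepsilon\partial_t w+\partial_a w+\zeta(u)\,w=0$ with $w(0,\cdot)=0$ and $w(t,0)=-\beps(t)\int_{\rr}w(t,a)\,da$, so integrating $\varepsilon\partial_t|w|+\partial_a|w|+\zeta(u)|w|\le0$ in age and discarding the nonnegative term $\int_{\rr}\zeta(u)|w|\,da$ yields $\varepsilon\frac{d}{dt}\int_{\rr}|w|\,da\le\bmax\int_{\rr}|w|\,da$, whence Gronwall forces $w\equiv0$; this uses only $\zeta\ge0$ and $\beps\le\bmax$ and therefore remains valid here. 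The only genuinely new step compared with \cite{MiOel.1} is controlling $\zeta(u)$ along characteristics in the absence of an upper bound, which is precisely what the space $X_T$ and the weight $\omega$ are designed for; everything else transcribes the bounded-$\zeta$ argument, so I expect no deeper obstacle.
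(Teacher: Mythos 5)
Your proposal is correct and reproduces the argument the paper itself relies on: the paper gives no proof of this theorem, presenting it as a result carried over from \cite{MiOel.1}, whose existence proof is exactly the reduction you describe --- integrate the characteristic formula \eqref{rho_model_by_characteristics} in age to obtain a linear Volterra equation of the second kind for $\muze$, solve it by Picard iteration, and read $\rhow$ off the formula. Your key observation --- that the survival kernel lies in $(0,1]$ using only $\zeta \ge \ztmin > 0$, and that $u \in X_T$ with the weight $\omega$ guarantees finiteness of $\int \zeta(u)$ along characteristics, so that no upper bound $\ztmax$ is needed anywhere --- is precisely the reason the result remains valid under the weakened Assumption~\ref{ass.zeta}.
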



\begin{lemma}\label{lemxyz} 
Under the same assumptions as in Theorem \ref{prop.rho.exist},
let $\rhow$ be the unique solution of problem \eqref{eq.rho.eps}, 
then it satisfies a weak formulation of this problem, namely
\begin{multline}
\label{equ_rho_weak}
\int_0^\infty\int_0^T \rhow(t,a)  \left( \e \dt \varphi+\da \varphi- \zteps \varphi\right) \; dt \; da  - \e \int_0^\infty \rhow(t,a) \varphi(t=T,a) \; da \; + \\
 + \int_0^T \rhow(t,a=0) \, \varphi (t,0) \; dt  + \e \int_0^\infty \rhoi(a) \varphi(t=0,a) \; da = 0 \; , 
\end{multline}
for every $T>0$ and every test function $\varphi \in \cD([0,T]\times\rr)$.
\end{lemma}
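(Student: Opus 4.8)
The plan is to derive the weak formulation directly from the representation by characteristics in Theorem~\ref{prop.rho.exist}, using a density-type argument. First I would take a test function $\varphi \in \cD([0,T]\times\rr)$ and consider the quantity $\e\dt\varphi + \da\varphi - \zteps\varphi$ evaluated along the characteristic curves of the transport operator $\e\dt + \da$, which are the lines $(t,a) = (t_0 + \e\sigma, \sigma)$ for $\sigma \geq 0$ (with $t_0$ possibly negative, corresponding to the initial data branch when $t_0 = t - \e a < 0$). Along such a curve, if we set $\Psi(\sigma) := \varphi(t_0 + \e\sigma, \sigma)$ and recall that $\rhow$ satisfies the ODE $\frac{d}{d\sigma}\big[\rhow(t_0+\e\sigma,\sigma)\big] = -\zteps\,\rhow$ in $\sigma$ (this is exactly the sense of characteristics in which $\rhow$ solves \eqref{eq.rho.eps}), one checks that $\frac{d}{d\sigma}\big[\rhow\,\Psi\big] = \rhow\big(\e\dt\varphi + \da\varphi - \zteps\varphi\big)$ along the curve. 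Integrating this total derivative in $\sigma$ and then integrating over the family of characteristics produces the boundary contributions at $\sigma = 0$ and at the exit point, which are exactly the four terms appearing in \eqref{equ_rho_weak}: the bulk integral, the terminal term $-\e\int\rhow(T,a)\varphi(T,a)\,da$, the influx term $\int_0^T\rhow(t,0)\varphi(t,0)\,dt$ coming from the boundary $a=0$, and the initial term $\e\int\rhoi(a)\varphi(0,a)\,da$ coming from $t=0$.

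More concretely, I would split the time-space strip $[0,T]\times\rr$ into the two regions $\{a < t/\e\}$ and $\{a \geq t/\e\}$ matching the two branches of \eqref{rho_model_by_characteristics}, apply the change of variables aligning coordinates with the characteristics in each region, and perform the one-dimensional integration by parts (fundamental theorem of calculus) in the characteristic variable. In the region $\{a<t/\e\}$ the characteristic enters through $a=0$ at time $t-\e a > 0$, contributing the $\int_0^T\rhow(t,0)\varphi(t,0)\,dt$ term once one uses the boundary condition $\rhow(t,0) = \beps(t)(1-\muze(t))$ implicitly encoded in the representation; in the region $\{a \geq t/\e\}$ the characteristic enters through $t=0$ at age $a - t/\e \geq 0$, contributing the $\e\int\rhoi(a)\varphi(0,a)\,da$ term. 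The common exit at $t=T$ gives the terminal term, and compact support of $\varphi$ in $a$ kills any contribution at $a = +\infty$.

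The regularity needed to justify these manipulations is exactly what Theorem~\ref{prop.rho.exist} provides: $\rhow \in C^0(\rr;L^1(\rr)) \cap L^\infty(\rr^2)$ guarantees all integrals are finite and the integration by parts is legitimate (one may, if desired, first establish the identity for $\rhow$ restricted to a smooth approximation and pass to the limit, but the $L^\infty$ bound plus continuity in $L^1$ makes a direct argument cleaner). The main obstacle, in my view, is purely bookkeeping: tracking the geometry of the characteristics across the interface $a = t/\e$ and making sure the boundary terms are collected with the correct signs and the correct factors of $\e$ — in particular the factor $\e$ multiplying the $t=0$ and $t=T$ terms arises from the Jacobian of the change of variables $(t,a) \mapsto (\text{entry time}, \sigma)$, whereas the $a=0$ influx term carries no such factor. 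There is no deep analytic difficulty here, so the proof is essentially an exercise in converting the mild (characteristic) formulation into the distributional one, and I would keep it short.
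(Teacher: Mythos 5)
Your argument is correct, and it is the standard one for this statement: the paper itself gives no proof of Lemma~\ref{lemxyz} (it is stated without a proof environment, the equivalence of the mild and weak formulations being deferred to the construction by characteristics of Theorem~\ref{prop.rho.exist} and to \cite{MiOel.1}), so there is nothing to contrast it with beyond confirming that your route is the intended one. Integrating $\tfrac{d}{d\sigma}\bigl[\rhow\,\Psi\bigr]=\rhow\bigl(\e\dt\varphi+\da\varphi-\zteps\varphi\bigr)$ along each characteristic, splitting the strip into $\{a<t/\e\}$ and $\{a\ge t/\e\}$, and collecting the entry and exit contributions does produce exactly the four terms of \eqref{equ_rho_weak}, and the regularity supplied by Theorem~\ref{prop.rho.exist} (absolute continuity of $\rhow$ along each characteristic, $L^\infty$ bounds on the support of $\varphi$, Fubini over the family of characteristics) is enough to justify every step. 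One small imprecision worth fixing in a written version: in the region $\{a<t/\e\}$, parametrized by entry time $t_0=t-\e a$ and age $\sigma=a$, the Jacobian of $(t_0,\sigma)\mapsto(t,a)$ is $1$, not $\e$; the factor $\e$ on the terminal term arises only when you convert the exit integral $\int_0^T\rhow(T,(T-t_0)/\e)\,\varphi(T,\cdot)\,dt_0$ back to the variable $a$, whereas in the region $\{a\ge t/\e\}$, parametrized by entry age $a_0=a-t/\e$, the Jacobian genuinely equals $\e$ and directly yields the factor on the $t=0$ term. This does not affect the validity of the proof, only the bookkeeping narrative.
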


Following the same argumentation as Lemma 2.2 in \cite{MiOel.1} one has
\begin{lemma}
Under the same assumptions as in Theorem \ref{prop.rho.exist}, it holds that  
$ \muze(t) <1$ for any time. This in turn implies that
$\rhow(t,a) \geq 0$ for almost every $(t,a)$ in $\rr^2$.
\end{lemma}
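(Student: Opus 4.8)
The plan is to follow the strategy of \cite[Lemma~2.2]{MiOel.1}, which only uses ingredients that remain valid under Assumptions~\ref{ass.zeta}, \ref{hypo.data.deux} and \ref{ass.beta}: the representation by characteristics \eqref{rho_model_by_characteristics}, the continuity of $t\mapsto\muze(t)$ coming from $\rhow\in C^0(\rr;L^1(\rr))$, the moment bounds of Assumption~\ref{hypo.data.deux}, and the lower bound $\beps\geq\bmin>0$. The argument is a sign inspection of \eqref{rho_model_by_characteristics} combined with a continuation (bootstrap) argument on $\muze$.

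First I would read off the sign of $\rhow$ from \eqref{rho_model_by_characteristics}. On the region $a\geq t/\varepsilon$ the value is $\rhoi(a-t/\varepsilon)$ times a positive exponential, hence nonnegative by Assumption~\ref{hypo.data.deux}(i); on $a<t/\varepsilon$ it is $\beps(t-\varepsilon a)\,(1-\muze(t-\varepsilon a))$ times a positive exponential, so, since $\beps>0$, it has the sign of $1-\muze(t-\varepsilon a)$. The immediate consequence is the implication: if $\muze(s)\leq1$ for all $s\in[0,t]$, then $\rhow(t,\cdot)\geq0$ a.e.

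The second and main step is the continuation argument. Set $T^\ast:=\sup\{\tau>0:\muze(s)<1\text{ for all }s\in[0,\tau)\}$; since $\muze(0)=\int_0^\infty\rhoi\,da<1$ by Assumption~\ref{hypo.data.deux}(ii) and $\muze$ is continuous, $T^\ast>0$, and by the first step $\rhow\geq0$ a.e.\ on $[0,T^\ast)$. I would then integrate \eqref{eq.rho.eps} in age on $[0,T^\ast)$ and use the boundary condition $\rhow(t,0)=\beps(t)(1-\muze(t))$ to obtain the balance law
\[
\varepsilon\,\tfrac{d}{dt}\muze(t) \;=\; \beps(t)\,(1-\muze(t)) \;-\; \int_0^\infty \zeta(u(t,a))\,\rhow(t,a)\,da \;\leq\; \beps(t)\,(1-\muze(t)),
\]
the inequality holding because $\rhow\geq0$ and $\zeta>0$ there. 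Writing $w:=1-\muze$, this reads $\varepsilon w'\geq-\beps w$, so Gronwall's inequality gives $1-\muze(t)\geq(1-\muze(0))\exp\!\big(-\tfrac1\varepsilon\int_0^t\beps(s)\,ds\big)>0$ on $[0,T^\ast)$. Hence $\muze$ stays bounded away from $1$ on every bounded subinterval; if $T^\ast<\infty$ this forces $\limsup_{t\uparrow T^\ast}\muze(t)<1$, and continuity of $\muze$ would then allow the interval to be extended, contradicting maximality. Therefore $T^\ast=+\infty$, i.e.\ $\muze(t)<1$ for all $t\geq0$, and feeding this back into the sign analysis of the first step yields $\rhow(t,a)\geq0$ for a.e.\ $(t,a)\in\rr^2$.

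The step I expect to be the only delicate point is justifying the age-integrated identity above, namely that the characteristic flux at $a=+\infty$ vanishes and that $\int_0^\infty\zeta(u)\rhow\,da$ is finite. I would handle this either by propagating the first-moment bound of Assumption~\ref{hypo.data.deux}(iii) along characteristics, using that $\zeta$ grows at most linearly by Assumption~\ref{ass.zeta}(b) and $u\in X_T$, so that $\zeta(u(t,a))\leq C(1+a)$; or, more cleanly, by using the weak formulation \eqref{equ_rho_weak} with test functions $\varphi(t,a)=\psi(t)\chi_R(a)$, where $\chi_R$ is a smooth cut-off equal to $1$ on $[0,R]$, and letting $R\to\infty$, the error terms generated by $\da\chi_R$ being absorbed by the same moment bound. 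Everything else is a routine comparison estimate.
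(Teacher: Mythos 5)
Your proof is correct and follows essentially the same route as the paper, which itself only points to Lemma~2.2 of \cite{MiOel.1}: read the sign of $\rhow$ off the characteristics formula \eqref{rho_model_by_characteristics}, integrate the equation in age to get $\varepsilon\,\tfrac{d}{dt}\muze\leq\beps(1-\muze)$ while $\rhow\geq0$, and close a Gronwall/continuation argument exactly as the paper does in the analogous Lemma~\ref{lem.moy.zt}. Your remark on justifying the age integration via the first-moment bound and the linear growth of $\zeta$ is the right way to make the formal computation rigorous.
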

For $p\in\N$ we define the p-th moment of the solution $\rhoe$ of \eqref{eq.rho.eps}
\begin{equation*}
 \mu_p(t) := \int_0^\infty a^p \rhow(t,a) \, da \; .
\end{equation*}
Then, following the same argumentation as Lemma 2.2 in \cite{MiOel.1}, one has
\begin{lemma}
\label{lem.higher.moments_bounds}
Under the same assumptions as in Theorem \ref{prop.rho.exist},
$$
\mu_{p}(t) \leq \mupmax \quad \text{for} \quad p=1,2\; , 
$$
where the generic constants $\mupmax$ read : 
$$
\mupmax := \sum_{\ell =0}^p \frac{p!}{\ell ! \ztmin^{p-\ell}} \mu_\ell(0) + \frac{p!}{\ztmin^p} \frac{\bmax}{\bmin + \ztmin}.
$$
\end{lemma}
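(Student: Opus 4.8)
The plan is to induct on $p$: from \eqref{eq.rho.eps} I would derive, for each $p\in\{0,1,2\}$, a one–sided differential inequality for a \emph{truncated} moment $\mu_p^R$, close it with a scalar comparison estimate fed by the already–known bound on $\mu_{p-1}$, and then remove the truncation by monotone convergence. The base case $p=0$ produces the constant against which the recursion defining $\mupmax$ unfolds.

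\emph{Truncated moment identity.} Fix a smooth nonincreasing cut-off $\chi:[0,\infty)\to[0,1]$ with $\chi\equiv1$ on $[0,1]$ and $\chi\equiv0$ on $[2,\infty)$, and set $\chi_R(a):=\chi(a/R)$ for $R>0$. I would insert $\varphi(t,a)=\psi(t)\,a^p\,\chi_R(a)$, with $\psi$ smooth on $[0,T]$, into the weak formulation \eqref{equ_rho_weak}; this $\varphi$ is admissible for $p\in\{0,1,2\}$ since it is smooth and compactly supported in $a$, and $\varphi(t,0)=\psi(t)$ if $p=0$ and $\varphi(t,0)=0$ otherwise. Writing $\mu_p^R(t):=\int_{\rr}a^p\chi_R(a)\,\rhow(t,a)\,da$, the identity so obtained shows (using $\rhow\in L^\infty(\rr^2)$ and the fact that $a^{p-1}\chi_R$ and $a^p\chi_R\zteps$ are integrable against $\rhow$, $\chi_R$ having compact support) that $\mu_p^R$ is continuously differentiable with
\[
\e\,\frac{d}{dt}\mu_p^R=\int_{\rr}\Bigl(p\,a^{p-1}\chi_R(a)+\tfrac{a^{p}}{R}\chi'(a/R)\Bigr)\rhow\,da-\int_{\rr}a^{p}\chi_R(a)\,\zteps\,\rhow\,da+\delta_{p,0}\,\beps(1-\muze)\,.
\]
Since $\rhow\ge0$ (positivity stated just above), $\chi'\le0$, $0\le\chi_R\le1$ and $\zteps\ge\ztmin$, I would drop the $\chi'$–term and use $\int_{\rr}a^p\chi_R\,\zteps\,\rhow\,da\ge\ztmin\,\mu_p^R$ to obtain, for $p\in\{1,2\}$,
\[
\e\,\frac{d}{dt}\mu_p^R+\ztmin\,\mu_p^R\le p\,\mu_{p-1}(t)\,,
\]
while for $p=0$, adding $0\le\mu_0^R\le\muze$ and $\bmin\le\beps\le\bmax$, so that $\beps(1-\muze)\le\bmax-\bmin\muze\le\bmax-\bmin\mu_0^R$,
\[
\e\,\frac{d}{dt}\mu_0^R+(\bmin+\ztmin)\,\mu_0^R\le\bmax\,.
\]

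\emph{Closing the induction.} I would then invoke the elementary comparison lemma: if $\e\,y'\le A-By$ with $A,B>0$ and $y$ absolutely continuous, then $y(t)\le\max(y(0),A/B)\le y(0)+A/B$. Applied to $\mu_0^R$ this gives $\mu_0^R(t)\le\mu_0^R(0)+\bmax/(\bmin+\ztmin)\le\muze(0)+\bmax/(\bmin+\ztmin)=:M_0$; letting $R\to\infty$ and using monotone convergence ($\chi_R\uparrow1$, $\rhow\ge0$) yields $\muze(t)\le M_0$, which is $\mupmax$ for $p=0$. For $p\in\{1,2\}$, assuming inductively $\mu_{p-1}(t)\le M_{p-1}$ with $M_{p-1}$ the constant of the statement, the inequality above becomes $\e(\mu_p^R)'+\ztmin\mu_p^R\le pM_{p-1}$, so the comparison lemma together with $\mu_p^R(0)\le\mu_p(0)\le c_p<\infty$ (Assumption~\ref{hypo.data.deux}(iii)) gives $\mu_p^R(t)\le\mu_p(0)+\tfrac{p}{\ztmin}M_{p-1}$; passing $R\to\infty$ gives $\mu_p(t)\le\mu_p(0)+\tfrac{p}{\ztmin}M_{p-1}=:M_p$. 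A direct check then shows that the recursion $M_0=\muze(0)+\bmax/(\bmin+\ztmin)$, $M_p=\mu_p(0)+\tfrac{p}{\ztmin}M_{p-1}$ unfolds exactly into the announced closed form for $\mupmax$, which closes the induction.

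The only genuinely delicate point is the truncated moment identity — more precisely, making sure that no mass escapes at $a=+\infty$ when one differentiates a moment in time. The cut-off $\chi_R$ is designed for this: the sign of $\chi'$ lets one simply discard the single term that would otherwise require control at infinity, so the whole argument needs nothing beyond positivity of $\rhow$, finiteness of the initial moments, and the already-established bound on $\mu_{p-1}$; this is the mechanism of Lemma~2.2 in \cite{MiOel.1}. Everything else is three applications of the scalar comparison estimate.
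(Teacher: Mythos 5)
Your proof is correct and follows essentially the same route as the paper: the same differential inequalities $\e\,\mu_0'+(\bmin+\ztmin)\mu_0\le\bmax$ and $\e\,\mu_p'+\ztmin\mu_p\le p\,\mu_{p-1}$, closed by a scalar comparison estimate and the recursion $M_p=\mu_p(0)+\tfrac{p}{\ztmin}M_{p-1}$, which indeed unfolds into the stated constant. The only difference is that you rigorously justify the formal age-integration via the cut-off $\chi_R$ and monotone convergence, where the paper performs the computation formally.
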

\begin{proof}
 When $p=0$ we simply integrate \eqref{eq.rho.eps} with respect to age :
 $$
 \e \dt \muze + \beps \muze + \int_{\rr} \zteps \rhoe da = \beps
 $$
 as $\zteps$ is bounded from below and using Gronwall's lemma one has :
 $$
 \muze(t) \leq \muze(0) + \frac{ \bmax}{\bmin + \ztmin}
 $$
 For any integer $p$ we then write :
 $$
 \e \dt \mu_p + \ztmin \mu_p - p \mu_{p-1} \leq 0
 $$
which using Gronwall's lemma gives 
$$
\nrm{\mu_{p}}{L^\infty(0,T)} \leq \mu_p(0) + \frac{ p}{\ztmin} \nrm{\mu_{p-1}}{L^\infty(0,T)} .
$$
By recursion, one proves the claim.
\end{proof}


\begin{proposition}\label{prop.h.zero}
Under Assumptions \ref{ass.zeta}, \ref{hypo.data.deux} and \ref{ass.beta}, 
setting $\hatrho := \rhow_2-\rhow_1$ where $\rhow_2$ and $\rhow_1$ solve \eqref{eq.rho.eps} 
with off-rates $\zeta(w_2)$ (resp. $\zeta(w_1)$) where $w_2$ (resp. $w_1$) is a function in $X_T$,
we find that
$$
\cH_0[\hatrho](t) \leq c_0 ( 1 - \exp( \ztmin t / \e)) \nrm{\hatw}{X_t}\;, \quad \forall t \in(0,T) \; ,
$$
 where $\hat{w}:=w_2-w_1$, $c_0:=\frac{2}{\ztmin} \cz \mu_{1,\max}$,   $\mu_{1,\max}$ being the bound on the first moment of $\rhow_1$.
\end{proposition}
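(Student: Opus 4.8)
The plan is to turn the difference $\hatrho$ into the solution of a \emph{linear} renewal equation whose homogeneous part is exactly the operator for which the dissipation of $\cH_0$ was proved in \cite{MiOel.1}, and whose source is controlled by $\hatw$; then one integrates the resulting differential inequality, which has zero initial datum.

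First I would subtract the two copies of \eqref{eq.rho.eps} satisfied by $\rhow_2$ (off-rate $\zeta(w_2)$) and $\rhow_1$ (off-rate $\zeta(w_1)$). Since both start from the same $\rhoi$ and the boundary map $\muze\mapsto\beps(1-\muze)$ is affine, one obtains that $\hatrho$ solves
\begin{equation*}
\left\{
\begin{aligned}
& \e\dt\hatrho+\da\hatrho+\zeta(w_2)\,\hatrho = g\; , & t>0\, ,\ a>0\; , \\
& \hatrho(t,a=0) = -\beps(t)\,\hat\mu_0(t)\; , & t>0\; , \\
& \hatrho(t=0,a) = 0\; , & a\geq 0\; ,
\end{aligned}
\right.
\end{equation*}
where $\hat\mu_0(t):=\int_\rr\hatrho(t,a)\,da$ is its mass and $g(t,a):=-\bigl(\zeta(w_2(t,a))-\zeta(w_1(t,a))\bigr)\,\rhow_1(t,a)$. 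The point is that the left-hand side together with the boundary coupling through $\hat\mu_0$ is precisely the linear renewal operator for which $\frac{d}{dt}\cH_0\le-\frac{\ztmin}{\e}\cH_0$ was obtained in \cite{MiOel.1}: that computation uses only $\zeta(w_2)\ge\ztmin>0$ and the affine boundary term, so it is unaffected by the weaker hypotheses here. Carrying the source through that computation I expect to reach
$$
\e\frac{d}{dt}\cH_0[\hatrho](t)+\ztmin\,\cH_0[\hatrho](t)\le 2\,\nrm{g(t,\cdot)}{L^1_a}\; ;
$$
equivalently, writing $\hatrho$ by Duhamel against the linear propagator — which contracts $\cH_0$ at rate $\ztmin/\e$ — and using that $\cH_0$ is a norm gives $\cH_0[\hatrho](t)\le\frac1\e\int_0^t e^{-\ztmin(t-s)/\e}\,\cH_0[g(s,\cdot)]\,ds$. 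In either route the vanishing initial datum produces the factor $1-e^{-\ztmin t/\e}$ in front.

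It then remains to estimate the source uniformly in time. Here I would use Assumption~\ref{ass.zeta}\,b) to get $|g(s,a)|\le\cz\,|\hatw(s,a)|\,\rhow_1(s,a)$, then the definition \eqref{def_XT_sharp}--\eqref{def_omega} of $X_t$ to bound $|\hatw(s,a)|\le(1+a)\,\nrm{\hatw}{X_s}\le(1+a)\,\nrm{\hatw}{X_t}$ for $s\le t$, so that $\nrm{g(s,\cdot)}{L^1_a}\le\cz\,\nrm{\hatw}{X_t}\int_\rr(1+a)\rhow_1(s,a)\,da$. By $\muze<1$ and the first–moment bound of Lemma~\ref{lem.higher.moments_bounds}, $\int_\rr(1+a)\rhow_1(s,a)\,da$ is bounded uniformly in $s$ by a constant proportional to $\mu_{1,\max}$. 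Feeding this into the differential inequality and evaluating $\frac1\e\int_0^t e^{-\ztmin(t-s)/\e}\,ds=\frac{1}{\ztmin}\bigl(1-e^{-\ztmin t/\e}\bigr)$ yields the announced estimate with $c_0=\frac{2}{\ztmin}\cz\,\mu_{1,\max}$.

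The genuinely delicate step is the $\cH_0$ differential inequality: one must handle the non-differentiability of $a\mapsto|\hatrho(t,a)|$ (via the weak/characteristic formulation \eqref{rho_model_by_characteristics}, \eqref{equ_rho_weak}, or a regularisation) and, crucially, the exact cancellation of the boundary contribution at $a=0$ against the term produced by $\frac{d}{dt}|\hat\mu_0|$. This is exactly the mechanism already exploited in \cite{MiOel.1}, so in practice one only needs to verify that the source $g$ passes through that argument as a pure forcing term, which it does. Everything else — the Lipschitz bound on $\zeta$, the weighted space $X_t$, and the moment bounds of Lemma~\ref{lem.higher.moments_bounds} — is routine.
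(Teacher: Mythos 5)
Your proposal is correct and follows essentially the same route as the paper: you derive exactly the system the paper writes for $\hatrho$ (transport with off-rate $\zeta(w_2)$, source $-\hat\zeta\,\rhow_1$, boundary datum $-\beps\hat\mu_0$, zero initial datum) and then run the $\cH_0$-dissipation argument of Lemmas~3.2--3.3 of \cite{MiOel.1} with the source estimated via the Lipschitz bound on $\zeta$, the weight $\omega$, and the first-moment bound. The only (cosmetic) discrepancies are in the bookkeeping of the constant, where $\int_{\rr}(1+a)\rhow_1\,da=\mu_0+\mu_1$ rather than $\mu_1$ alone, and in the sign of the exponent, which your version states correctly as $-\ztmin t/\e$.
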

\begin{proof}
The proof follows the same lines as for Lemma~3.2  and Lemma~3.3 in \cite{MiOel.1} based on the system satisfied by $\hatrho$,
$$
\left\{
\begin{aligned}
&\e \dt \hatrho + \da \hatrho + \zeta_2 \hatrho = - \hat{\zeta} \rhow_1 & t>0, a>0,\\
&\hatrho(t,0)= - \beta(t) \int_{\rr} \hatrho(t,\tia) \, d \tia, & t>0,\\
&\hatrho(0,a) = 0, &a>0,\\
\end{aligned}
\right.
$$ 
where $\hat{\zeta}:=\zeta(w_2)-\zeta(w_1)$.
\end{proof}

For $k\geq1$ we define
$$
\cH_k[\rho] := \int_{\rr} (1+a)^k \rho(t,a) da
$$
for these functionals one has :
\begin{proposition}\label{prop.h.k}
Under the same hypotheses as in the previous proposition, and  if moreover 
 $$
 \int_{\rr} (1+a)^\ell \rhoi(a) \, da < \infty, \quad \forall \ell \in \{ 0,k+1\}, 
 $$
 then 
$$
\cH_k[\hatrho](t) \leq h_k ( 1 - \exp( -\ztmin t / \e)) \nrm{\hatw}{X_t}, \quad \forall t \in(0,T),
$$
where the constants $h_k$ depend only on $\ztmin,\cz$ and on the constants $(\mu_{\ell,\max})_{\ell \in \{0,k+1\}}$ 
related to the bound on the $\ell$-th  moment
of $\rhow_2$.
\end{proposition}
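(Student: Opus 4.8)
The plan is to mimic the proof of Proposition~\ref{prop.h.zero}, but weighting the characteristic estimates with the polynomial factor $(1+a)^k$ rather than the constant~$1$. I would start from the system satisfied by $\hatrho$ recalled at the end of the proof of Proposition~\ref{prop.h.zero}, multiply the transport equation by $(1+a)^k$ and integrate in age. The key algebraic identity is $\da\big((1+a)^k\hatrho\big) = (1+a)^k\da\hatrho + k(1+a)^{k-1}\hatrho$, so that after integration by parts the boundary term at $a=0$ produces $\hatrho(t,0) = -\beta(t)\int_{\rr}\hatrho\,d\tia$, which in turn is controlled by $\cH_0[\hatrho](t)$, and a volume term $-k\int_{\rr}(1+a)^{k-1}\hatrho\,da$ appears that couples $\cH_k$ to $\cH_{k-1}$. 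The dissipation term $-\int_{\rr}(1+a)^k\zeta_2\hatrho\,da$ is bounded below in absolute value using $\zeta_2\geq\ztmin$, and the source term is $-\int_{\rr}(1+a)^k\hat\zeta\,\rhow_1\,da$, estimated by $\cz\,\nrm{\hatw}{X_t}$ times the weighted moment $\int_{\rr}(1+a)^{k+1}\rhow_1\,da$, which is finite by the moment bounds of Lemma~\ref{lem.higher.moments_bounds} together with the extra hypothesis on $\rhoi$ for $\ell=k+1$ (this is exactly why that hypothesis is imposed).

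The execution then proceeds by induction on~$k$. The base case $k=0$ is Proposition~\ref{prop.h.zero}. Assuming the bound for $\cH_{k-1}[\hatrho]$, the differential inequality obtained above reads, schematically,
\begin{equation*}
\e\,\ddt{}\cH_k[\hatrho] + \ztmin\,\cH_k[\hatrho] \leq C_1\,\cH_{k-1}[\hatrho](t) + C_2\,\cH_0[\hatrho](t) + \cz\,\mu_{k+1,\max}\,\nrm{\hatw}{X_t}\,,
\end{equation*}
where I have used $\nrm{\hat w}{X_s}\leq\nrm{\hat w}{X_t}$ for $s\leq t$ and that each $\cH_j[\hatrho]$ with $j<k$ is already bounded by a constant times $(1-\exp(-\ztmin t/\e))\nrm{\hat w}{X_t}\leq\nrm{\hat w}{X_t}$. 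Solving this linear ODE with the integrating factor $\exp(\ztmin t/\e)$ and the zero initial datum $\cH_k[\hatrho](0)=0$ gives
\begin{equation*}
\cH_k[\hatrho](t) \leq \frac{1}{\ztmin}\big(C_1 + C_2 + \cz\,\mu_{k+1,\max}\big)\big(1-\exp(-\ztmin t/\e)\big)\,\nrm{\hat w}{X_t}\,,
\end{equation*}
which is the claimed inequality with $h_k := \tfrac{1}{\ztmin}(C_1+C_2+\cz\,\mu_{k+1,\max})$ depending only on $\ztmin$, $\cz$ and the moment bounds $\mu_{\ell,\max}$ for $\ell\in\{0,\dots,k+1\}$; one can absorb the lower-order contributions so that only $\ell\in\{0,k+1\}$ is actually needed, as stated. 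I would also note that, as in \cite{MiOel.1}, the formal multiplication/integration argument is justified rigorously by working with the characteristic representation \eqref{rho_model_by_characteristics} or with truncated test functions $(1+a)^k\chi_R(a)$ and letting $R\to\infty$, the weighted moment bounds guaranteeing the integrability needed to pass to the limit.

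The main obstacle I anticipate is not the ODE manipulation but the bookkeeping of the source and coupling terms: one must check that $\int_{\rr}(1+a)^k|\hat\zeta(t,a)|\,\rhow_1(t,a)\,da$ is genuinely finite and uniformly bounded in $t\in(0,T)$, which requires the $(k{+}1)$-st weighted moment of $\rhow_1$ (equivalently of $\rhow_2$, the two being interchangeable up to renaming). Lemma~\ref{lem.higher.moments_bounds} is stated only for $p=1,2$, so for general $k$ one needs its obvious extension — the same Gronwall recursion $\e\dt\mu_p+\ztmin\mu_p-p\mu_{p-1}\leq 0$ — valid under the hypothesis $\int_{\rr}(1+a)^{k+1}\rhoi\,da<\infty$; invoking this extension, together with the elementary bound $(1+a)^{k+1}\leq 2^{k+1}(1+a^{k+1})$, closes the estimate. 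The other point requiring a little care is that $\hatrho$ changes sign, so $\cH_k[\hatrho]$ as written is not a norm; however, since in Proposition~\ref{prop.h.zero} one already controls $\int_{\rr}|\hatrho|\,da$, and since the proof there actually produces a bound on $\int_{\rr}(1+a)^k|\hatrho|\,da$ by running the characteristics argument, the same mechanism upgrades to the weighted case and makes the signed quantity $\cH_k[\hatrho]$ a fortiori controlled.
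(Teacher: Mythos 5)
Your proposal is correct and follows essentially the same route as the paper: weight by $(1+a)^k$, control the source term $(1+a)^k|\hat{\zeta}|\,\rhow_1$ by $\cz\,\nrm{\hatw}{X_t}$ times the $(k{+}1)$-st weighted moment, couple $\cH_k$ to $\cH_{k-1}$ and $\cH_0$ through the transport and boundary terms, and close an induction on $k$ with Gronwall. The points you flag for extra care (the sign of $\hatrho$, and extending Lemma~\ref{lem.higher.moments_bounds} beyond $p=1,2$) are exactly what the paper's proof leaves implicit.
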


\begin{proof}
 We apply a recursion argument. The case $k=0$ is proved by Proposition \ref{prop.h.zero}.
 We suppose that the claim is true for $\ell \leq k-1$. We have formally that
 $$
 \e \dt (1+a)^k | \hatrho | + \da  (1+a)^k | \hatrho | - k (1+a)^{k-1} | \hatrho | + \ztmin (1+a)^k  |\hatrho | \leq | \hat{\zeta} | (1+a)^k \rhow_2
 $$
 Integrating in age, one gets that
 $$
 \e \dt \cH_k[\hatrho] - \beta | \hatmu | + \ztmin \cH_k [\hatrho]\leq k \cH_{k-1}[\hatrho] + \cz \nrm{\hatw}{X_t} \int_{\rr} (1+a)^{k+1} \rhow_2(t,a) da
 $$
 which is then estimated giving:
 $$
\e \dt \cH_k[\hatrho]  + \ztmin \cH_k [\hatrho]\leq k \cH_{k-1}[\hatrho] + \cz C_{k+1} \nrm{\hatw}{X_t} + \bmax \cH_0[\hatrho]
$$
which using the Gronwall's Lemma gives
$$
\begin{aligned}
 \cH_{k}[\hatrho] (t) & \leq \frac{1-\exp(-\ztmin t / \e)}{\ztmin} 
\sup_{s \in (0,t)} 
\left( k \cH_{k-1}[\hatrho](s) + \bmax \cH_0[\hatrho](s) + \cz C_{k+1} \nrm{\hatw}{X_s} \right) \\
\end{aligned}
$$
where we used, in the last estimates, the recursion hypothesis and Proposition \ref{prop.h.zero}.
\end{proof}

If we give ourselves $T>0$ and a function $g \in L^\infty(0,T)$ and then we compute 
$w$ as the solution in the sense of characteristics of
 \begin{equation}
 \label{eq.w.eps}
 \left\{
 \begin{aligned}
 & \e \dt w + \da w = g(t) \,, &t>0 \, , \;a > 0 \; , \\
 & w(t,0)=0\,, &t>0  \; , \\
 & w(0,a)=\vepsi(a)\,, &a \geq 0  \; .
\end{aligned}
 \right.
 \end{equation}
 And all along the paper we will assume that the initial condition $\vepsi$ belongs
 to $L^\infty(\rr,\omega)$.
 For this simple transport problem it holds that
 \begin{theorem}\label{thm.w}
If $T>0$  and   $g$ is a function in $L^\infty(0,T)$,  
for any fixed $\e$ and any $T>0$ there exists a unique $w \in X_T$ solving problem \eqref{eq.w.eps}.
Moreover one has the {\em a priori} estimates:
 $$
 \nrm{w}{X_T} \leq \left( \frac{T}{T+\e} \right) \nrm{g}{L^\infty(0,T)} + \nrm{\vepsi}{L^\infty_a(\rr,\omega)}
 $$
 Moreover the maximal time of existence is infinite if $g\in L^\infty(\rr)$.
\end{theorem}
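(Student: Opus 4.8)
The plan is to integrate \eqref{eq.w.eps} explicitly along the characteristics of the transport operator $\e\,\dt + \da$ and then to read the weighted estimate directly off the resulting closed form. Along a characteristic $s \mapsto (s,\,a_0 + s/\e)$ one has $\frac{d}{ds}\,w(s,\,a_0+s/\e) = \dt w + \tfrac{1}{\e}\da w = \tfrac{1}{\e}\,g(s)$. Tracing the characteristic through a point $(t,a)$ backwards either to the initial line $\{t=0\}$ — which happens when $a \ge t/\e$, at the point $(0,\,a-t/\e)$ — or to the boundary $\{a=0\}$ — which happens when $a < t/\e$, at time $t-\e a>0$, where $w$ vanishes — yields
$$
w(t,a) =
\begin{cases}
\dfrac{1}{\e}\displaystyle\int_{t-\e a}^{t} g(s)\,ds\,, & 0 \le a < t/\e\,,\\[1.6ex]
\vepsi\!\left(a - \dfrac{t}{\e}\right) + \dfrac{1}{\e}\displaystyle\int_0^t g(s)\,ds\,, & a \ge t/\e\,.
\end{cases}
$$
This is precisely what it means for $w$ to solve \eqref{eq.w.eps} in the sense of characteristics, so the formula defines $w$ unambiguously; existence and uniqueness then reduce to checking that this $w$ belongs to $X_T$, which will follow from the a priori estimate. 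Uniqueness can equivalently be seen by observing that the difference of two solutions solves \eqref{eq.w.eps} with $g\equiv 0$ and vanishing initial and boundary data, hence is identically zero along every characteristic.

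For the bound I would treat the two branches separately. When $a < t/\e$, the first line gives $|w(t,a)| \le \frac{1}{\e}\,(\e a)\,\nrm{g}{L^\infty(0,T)} = a\,\nrm{g}{L^\infty(0,T)}$, so that after multiplication by $\omega(a) = 1/(1+a)$ and using that $s \mapsto s/(1+s)$ is increasing together with $a < t/\e \le T/\e$,
$$
\lvert w(t,a)\,\omega(a)\rvert \le \frac{a}{1+a}\,\nrm{g}{L^\infty(0,T)} \le \frac{T}{T+\e}\,\nrm{g}{L^\infty(0,T)}\,.
$$
When $a \ge t/\e$, the second line gives $|w(t,a)| \le \lvert\vepsi(a-t/\e)\rvert + \frac{t}{\e}\,\nrm{g}{L^\infty(0,T)}$. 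Since $\omega$ is nonincreasing and $0 \le a - t/\e \le a$ one has $\omega(a) \le \omega(a-t/\e)$, hence $\lvert\vepsi(a-t/\e)\rvert\,\omega(a) \le \lvert\vepsi(a-t/\e)\rvert\,\omega(a-t/\e) \le \nrm{\vepsi}{L^\infty_a(\rr,\omega)}$; and $\frac{t}{\e}\,\omega(a) = \frac{t/\e}{1+a} \le \frac{t/\e}{1+t/\e} = \frac{t}{t+\e} \le \frac{T}{T+\e}$. Adding the two contributions, taking the essential supremum over $a$ and then the supremum over $t \in (0,T)$, gives
$$
\nrm{w}{X_T} \le \frac{T}{T+\e}\,\nrm{g}{L^\infty(0,T)} + \nrm{\vepsi}{L^\infty_a(\rr,\omega)}\,,
$$
which in particular shows $w \in X_T$ and hence completes existence and uniqueness.

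For the last assertion, if $g \in L^\infty(\rr)$ then $\nrm{g}{L^\infty(0,T)} \le \nrm{g}{L^\infty(\rr)}$ for every $T>0$, the representation formula is valid for all $t>0$, and the estimate gives the uniform-in-$T$ bound $\nrm{w}{X_T} \le \nrm{g}{L^\infty(\rr)} + \nrm{\vepsi}{L^\infty_a(\rr,\omega)}$, so the solution exists and stays bounded in the weighted norm for all times, i.e. the maximal time of existence is infinite. There is no genuine obstacle in this proof; the only point requiring a little care is the bookkeeping of the weight $\omega$ under the shift $a \mapsto a - t/\e$ in the second branch, which is exactly what yields the factor $T/(T+\e)$ rather than the cruder constant $1$ and, as $T \to 0^+$, produces the smallness that will be used later in the fixed-point argument for $\Phi$.
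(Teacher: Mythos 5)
Your proof is correct and follows exactly the approach the paper intends (the paper states the theorem without proof, but defines $w$ as the solution ``in the sense of characteristics'' of \eqref{eq.w.eps}, which is precisely your explicit integration along characteristics). The two-branch estimate, with the weight $\omega$ absorbing the linear-in-$a$ growth on the branch $a<t/\e$ and the shift $a\mapsto a-t/\e$ handled on the other branch, is the right bookkeeping and yields the stated constant $T/(T+\e)$.
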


\section{Global existence results for cut-off problems}
\label{sec.global.ex}
We solve the problem find $(\rhow,w)$ satisfying :
\begin{equation}
 \left\{
\begin{aligned}\label{eq.rho.mod}
& \e \dt \rhow + \da \rhow + \zeta(w) \rhoe = 0 \; , &  t>0\, , \; a>0\; ,\\
& \rhow(t,0)=\beps (t)\left(1-\int_{\rr} \rhow(t,a) da\right) \; ,& t>0\; ,\\
& \rhow(0,a) = \rhoi(a)\; ,& a\geq 0\; \\
\end{aligned}
\right.
\end{equation}
and 
\begin{equation}\label{eq.w.mod}
 \left\{
\begin{aligned}
& \e \dt w + \da w = \tig_{w}(t)\;, & t>0\;, \quad a>0 \; ,\\
& w(t,0)=0\; ,& t>0 \; ,\\
&w(0,a)= \vepsi(a) \; & a\geq 0 \; ,\\
\end{aligned}
\right.
\end{equation}
where  we set 
\begin{equation}
\label{eq.g.mod}
\tigw(t) := \frac{1}{\max(\muze(t),\umu)} \left( \e \dt f + \max\left( -\ovp,  \min\left( \int_{\rr}  \zeta(w)\,  \rhoe \, w  \, da\,,\; \ovp \right) \right)\right), 
\end{equation}
where $\muze(t)=\int_{\rr} \rhoe(t,a)\, da$. The two constants  $\umu$ and $\ovp$ are  positive. 

\begin{lemma}\label{lem.lip}
We suppose that $(\umu,\ovp) \in (\RR^*_+)^2$ and that $f$ is Lipschitz. The function 
 $$
 \cG ( A ,B ) := \frac{1}{\max ( \umu,A) }\left\{ \e \dt  f + \max( - \ovp, \min(\ovp,B)) \right\}
 $$
 is a Lipshitz function with respect to $A \in \RR$ for any  fixed $B\in \RR$ and with respect to $B\in\RR$ for any fixed $A  \in \RR$. 
 The Lipschitz constants in both cases are uniform and depend only on $(\umu,\ovp)$.
\end{lemma}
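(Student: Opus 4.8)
The plan is to factor $\cG$ into a product of a function of the single variable $A$ and a function of the single variable $B$, and then to check that each factor is bounded and Lipschitz, the Lipschitz estimates combining by the usual product rule. Concretely, write $\cG(A,B)=m(A)\,n(B)$ with
$$
m(A):=\frac{1}{\max(\umu,A)}\,,\qquad n(B):=\e\dt f+\max\bigl(-\ovp,\min(\ovp,B)\bigr)\,,
$$
where $\e\dt f$ is regarded as a fixed real number, namely the value $\e\dt f(t)$ at the time $t$ under consideration.

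First I would treat the factor $n$. The truncation $B\mapsto\max(-\ovp,\min(\ovp,B))$ is the composition of $s\mapsto\min(\ovp,s)$ and $s\mapsto\max(-\ovp,s)$, each of which is $1$-Lipschitz as the minimum (resp.\ maximum) of the $1$-Lipschitz identity map with a constant; hence $n$ is $1$-Lipschitz in $B$. Moreover $\lvert\max(-\ovp,\min(\ovp,B))\rvert\leq\ovp$, so $\lvert n(B)\rvert\leq\e\lvert\dt f\rvert+\ovp=:N$, and by Assumption~\ref{ass.zeta}c) the quantity $\lvert\dt f\rvert$ is bounded on $[0,T]$ by the Lipschitz constant of $f$, which makes $N$ a constant independent of $t$, $A$ and $B$. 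For the factor $m$, since $\max(\umu,A)\geq\umu>0$ we have $0<m(A)\leq1/\umu$, and for any $A_1,A_2\in\RR$
$$
\lvert m(A_1)-m(A_2)\rvert=\frac{\lvert\max(\umu,A_2)-\max(\umu,A_1)\rvert}{\max(\umu,A_1)\,\max(\umu,A_2)}\leq\frac{\lvert A_1-A_2\rvert}{\umu^2}\,,
$$
where I use that $A\mapsto\max(\umu,A)$ is $1$-Lipschitz and that each factor in the denominator is at least $\umu$. (Equivalently, $m$ equals $1/\umu$ on $(-\infty,\umu]$ and $1/A$ on $[\umu,\infty)$, with derivative bounded by $1/\umu^2$, the two pieces matching continuously at $A=\umu$.)

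Combining the two estimates finishes the argument. At fixed $A$,
$$
\lvert\cG(A,B_1)-\cG(A,B_2)\rvert=m(A)\,\lvert n(B_1)-n(B_2)\rvert\leq\frac{1}{\umu}\,\lvert B_1-B_2\rvert\,,
$$
so $\cG$ is $(1/\umu)$-Lipschitz in $B$; at fixed $B$,
$$
\lvert\cG(A_1,B)-\cG(A_2,B)\rvert=\lvert m(A_1)-m(A_2)\rvert\,\lvert n(B)\rvert\leq\frac{N}{\umu^2}\,\lvert A_1-A_2\rvert\,,
$$
so $\cG$ is $(N/\umu^2)$-Lipschitz in $A$. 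Both constants are manifestly independent of the point $(A,B)$ and of $t$, which is the asserted uniformity.

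There is no genuine obstacle here. The only point deserving a moment's care is the kink of $m$ at $A=\umu$, which is precisely why I prefer the direct quotient estimate above (resting on the $1$-Lipschitz continuity of $\max(\umu,\cdot)$) to an argument through the derivative of $m$. I would also flag explicitly that the Lipschitz constant in the $A$-variable depends, through $N$, on $\e$ and on the Lipschitz constant of $f$ as well as on $\umu$ and $\ovp$; the statement ``depend only on $(\umu,\ovp)$'' is to be read as uniformity in $A$, $B$ and $t$, the remaining data $f$ and $\e$ being fixed throughout.
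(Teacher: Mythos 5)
Your proof is correct. The paper states Lemma~\ref{lem.lip} without proof, treating it as elementary, and your factorization $\cG(A,B)=m(A)\,n(B)$ with the $1$-Lipschitz truncation estimate for $n$ and the quotient estimate $\lvert m(A_1)-m(A_2)\rvert\leq \lvert A_1-A_2\rvert/\umu^2$ is exactly the natural argument that is being left implicit. Your closing remark is also well taken: the Lipschitz constant in $A$ involves $\e\lvert\dt f\rvert+\ovp$, so strictly speaking it depends on $\e$ and the Lipschitz constant of $f$ in addition to $(\umu,\ovp)$; since these are fixed data of the problem, this does not affect how the lemma is used in Theorem~\ref{thm.sys.mod}.
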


\begin{theorem}\label{thm.sys.mod}
 We suppose that Assumptions \ref{ass.zeta}, \ref{hypo.data.deux} and \ref{ass.beta} hold. Moreover we  assume that 
 $\vepsi \in L^\infty(\rr,\omega)$ and 
 $\nrm{\dt f}{L^\infty(\rr)}$ is finite 
 and that  the constants $\umu$ and $\ovp$ are fixed.
For any fixed time $T$ possibly infinite,  
there exists a unique pair of solutions $(\rhow_w,w)\in C(0,T;L^1(\rr))\times X_T$ 
solving the coupled problems \eqref{eq.rho.mod}, \eqref{eq.w.mod} and \eqref{eq.g.mod}.
\end{theorem}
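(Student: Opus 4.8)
The plan is to recast the coupled system as a fixed point equation for the map $\Phi$ described in the introduction and to solve it by Banach's theorem, the decisive point being that the two cut-offs $\umu$ and $\ovp$ render the otherwise singular and superlinear right-hand side harmless. Precisely: for $T>0$ (possibly $+\infty$) and $v\in X_T$, Theorem~\ref{prop.rho.exist} gives the unique $\rho_v$ solving \eqref{eq.rho.mod} with off-rate $\zeta(v)$; one sets $\tig_v$ as in \eqref{eq.g.mod} with this $\rho_v$, and Theorem~\ref{thm.w} produces $\Phi(v):=w\in X_T$, the unique solution of \eqref{eq.w.mod} with source $\tig_v$. A fixed point of $\Phi$ is exactly a solution of \eqref{eq.rho.mod}--\eqref{eq.g.mod}. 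Since $|\tig_v(t)|\le\umu^{-1}\big(\nrm{\e\dt f}{L^\infty(\rr)}+\ovp\big)=:G$ for \emph{every} $v$ and every $T$, the a priori bound of Theorem~\ref{thm.w} yields $\nrm{\Phi(v)}{X_T}\le G+\nrm{\vepsi}{L^\infty_a(\rr,\omega)}=:R$; hence $\Phi$ maps the closed ball $\mathcal{B}_R:=\{v\in X_T:\nrm{v}{X_T}\le R\}$ into itself, with $R$ independent of $T$.

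For contraction, fix $v_1,v_2\in\mathcal{B}_R$, put $w_i=\Phi(v_i)$ and $\hatw=w_2-w_1$; then $\hatw$ solves the transport equation with zero initial datum and source $\tig_{v_2}-\tig_{v_1}$, so Theorem~\ref{thm.w} gives $\nrm{\hatw}{X_\tau}\le\tfrac{\tau}{\tau+\e}\,\nrm{\tig_{v_2}-\tig_{v_1}}{L^\infty(0,\tau)}$ on any $(0,\tau)$. Writing $\tig_{v_i}(t)=\cG\big(\int_\rr\rho_{v_i}\,da,\ \int_\rr\zeta(v_i)\rho_{v_i}v_i\,da\big)$ with $\cG$ as in Lemma~\ref{lem.lip}, the uniform Lipschitz bounds of that lemma reduce matters to estimating $\big|\int_\rr(\rho_{v_2}-\rho_{v_1})\,da\big|$ and $\big|\int_\rr(\zeta(v_2)\rho_{v_2}v_2-\zeta(v_1)\rho_{v_1}v_1)\,da\big|$. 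The first is bounded by $\cH_0[\hatrho]$, hence controlled by Proposition~\ref{prop.h.zero}. The second I would split into $\int(\zeta(v_2)-\zeta(v_1))\rho_{v_2}v_2$, $\int\zeta(v_1)(\rho_{v_2}-\rho_{v_1})v_2$ and $\int\zeta(v_1)\rho_{v_1}(v_2-v_1)$, estimating each term with $|\zeta'|\le\cz$, the pointwise bound $|u(t,a)|\le(1+a)\nrm{u}{X_\tau}$ valid on $\mathcal{B}_R$, the moment bounds of Lemma~\ref{lem.higher.moments_bounds}, and the weighted stability estimates $\cH_k[\hatrho]$ of Propositions~\ref{prop.h.zero}--\ref{prop.h.k}. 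The outcome should be $\nrm{\tig_{v_2}-\tig_{v_1}}{L^\infty(0,\tau)}\le\Lambda\,\nrm{v_2-v_1}{X_\tau}$ with $\Lambda$ depending only on the fixed data, whence $\Phi$ is a contraction on $\mathcal{B}_R\subset X_\tau$ as soon as $\tfrac{\tau}{\tau+\e}\Lambda<1$; crucially the admissible $\tau$ depends only on $\e$ and $\Lambda$, not on the solution.

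Banach's theorem then gives a unique fixed point $w\in\mathcal{B}_R$ on $[0,\tau]$, i.e. a unique $(\rho_w,w)$ solving \eqref{eq.rho.mod}--\eqref{eq.g.mod} there. To globalise, one uses that the lemmas following Theorem~\ref{prop.rho.exist} ensure $\rho_w(\tau,\cdot)\ge0$ with $\int_\rr\rho_w(\tau,\cdot)\,da<1$ and $\mu_p(\tau)\le\mu_{p,\max}$ for $p=1,2$, while $\nrm{w(\tau,\cdot)}{L^\infty_a(\rr,\omega)}\le R$ and $\dt f$ stays globally Lipschitz; hence the construction restarts at $t=\tau$ with the same admissible bounds, in particular with the same step $\tau$. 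Concatenating finitely (resp. countably) many steps gives the unique solution on any finite $T$, and on $[0,\infty)$ since $\tig_w\in L^\infty(\rr)$, consistent with the last assertion of Theorem~\ref{thm.w}. Uniqueness on $[0,T]$ follows by propagating the local uniqueness interval by interval.

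The \emph{main obstacle} is the contraction estimate on $\tig$, and inside it the control of $\int_\rr\zeta(w)\rho_w w\,da$ and of its increments: elements of $X_T$ only decay like $\omega(a)=(1+a)^{-1}$ — so $w$ may grow linearly in $a$ and $\zeta(w)w$ quadratically — and the integrals converge only thanks to the bounded moments of $\rho_w$, while the corresponding differences must be absorbed against the weighted decay of $\hatrho=\rho_{v_2}-\rho_{v_1}$ supplied by Propositions~\ref{prop.h.zero}--\ref{prop.h.k}. Getting this weight/moment bookkeeping right, and checking that all the constants stay uniform so that the marching time step does not degenerate along the iteration, is where the real effort goes.
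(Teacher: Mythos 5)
Your proposal is correct and follows essentially the same route as the paper: the same fixed-point map $\Phi$, the same uniform bound $|\tig_v|\le \umu^{-1}(\e\nrm{\dt f}{L^\infty}+\ovp)$ for the invariance of a ball, the same splitting of $\tig_{v_2}-\tig_{v_1}$ via Lemma~\ref{lem.lip} into a denominator term controlled by $\cH_0[\hatrho]$ (Proposition~\ref{prop.h.zero}) and a three-term decomposition of the triple-product difference controlled by $\cz$, the moment bounds and $\cH_2[\hatrho]$ (Proposition~\ref{prop.h.k}), followed by iteration with a non-degenerating time step. The ``main obstacle'' you identify (the weight/moment bookkeeping for $\int_\rr\zeta(w)\rho_w w\,da$) is exactly where the paper's proof spends its effort as well.
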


\begin{proof}
 We apply the Banach fixed point Theorem to  $\Phi$  mapping $w\in X_T \mapsto u \in X_T$ such that
$$
 \left\{
\begin{aligned}
& \e \dt u + \da u = \tigw(t), & t>0, a>0,\\
& w(t,0)=0,& t>0,\\
&w(0,a)= \vepsi(a),& a>0,\\
\end{aligned}
\right.
$$
We prove that $\Phi$ is actually contractive in $X_T$ for a time $T$ small enough.
\begin{enumerate}[a)]
 \item The map $\Phi$ is endomorphic. 
 For any given $w\in X_T$ one has invariably 
\begin{equation}\label{eq.tig}
 | \tigw | \leq \frac{1}{\umu} \left( \e \nrm{\dt f}{L^\infty(0,T)} + \ovp \right) ,
\end{equation}
 which by the same method as in Theorem \ref{thm.w} provides a bound independent on $T$ in $X_T$ on $u$~:
 $$
 \nrm{u}{X_T} \leq \nrm{\tigw}{L^\infty(0,T)} + \nrm{\vepsi}{L^\infty_\omega(\rr)}.
 $$
 \item The map $\Phi$ is a contraction.  
 We set $\hatgw := \tig_{w_2} -  \tig_{w_1}$ and $\hatrho := \rhow_{w_2} - \rhow_{w_1}$ and so on. 
 Thanks to Lemma \ref{lem.lip}
 $$
 |  \hatgw(t) |  \leq \frac{|\hatmu |}{\umu^2} \left\{ \e \nrm{\dt f}{L^\infty(0,T)} + \ovp \right\} + \frac{1}{\umu} \left| \widehat{\left( \int_{\rr} \zeta \rhow w da \right)} \right| =: I_1 + I_2.
 $$
$I_1$ is immediately estimated thanks to Proposition \ref{prop.h.zero}, and one has :
$$
I_1 \leq \frac{1}{\umu^2}   \left\{ \e \nrm{\dt f}{L^\infty(0,T)} + \ovp \right\}  \cH_0[\hatrho](t)
\leq \frac{1}{\umu^2}   \left\{ \e \nrm{\dt f}{L^\infty(0,T)} + \ovp \right\}  c_0 \nrm{\hatw}{X_t},
$$
while we decompose the difference of triple products in $I_2$ as~:
$$
\begin{aligned}
 I_2  &\leq 
 \frac{1}{\umu} 
 \left| \int_{\rr} 
 	\hat{\zeta} \rhow_{w_2} w_2 
	+ \zeta_1 \hatrho w_2 
	+ \zeta_1 \rhow_{w_1} \hatw da 
\right| \\
 & \leq 
 \frac{1}{\umu} 
 \left(
 	\int_{\rr} \cz | \hatw | \rhow_{w_2} | w_2 | da 
	\right. \\
	&\left. + 
	\left( \cz \nrm{w_1}{X_t} + \ztz \right) 
	\left\{ 
		\int_{\rr} (1+a)^2 | \hatrho| da \nrm{w_2}{X_t} 
		+ \int_{\rr} (1+a)^2  \rhow_{w_1} da \nrm{\hatw}{X_t} 
	\right\} 
\right) \\
 & \leq c \left\{ \nrm{\hatw}{X_t} + \cH_2[\hatrho](t) \right\}  \leq \ovc \nrm{\hatw}{X_t}, 
\end{aligned}
$$ 
where the constant $\ovc$ depends on $\cz, \ztz, (\nrm{w_i}{X_t} )_{i\in\{1,2\}}$, $\umu$ and $\int_{\rr} a^k \rhoi(a) da$ for ${k\in\{0,1,2\}}$~.
Using again Theorem \ref{thm.w},  one has 
$$
\nrm{\hatu}{X_t} \leq \frac{t}{t + \e} \nrm{\hatgw}{L^\infty(0,t)} \leq \frac{t}{ \e} \nrm{\hatgw}{L^\infty(0,t)} \leq \frac{t\ovc }{ \e} \nrm{\hatw}{X_t}.
$$
If $T_0< \e/\ovc$ then there exists a unique fixed point $w\in X_{T_0}$ of the mapping $\Phi$. 
\item Global existence for any time.
We suppose that existence is established on the whole time interval $[0,T_{n-1}]$ for $n \geq 1$. We construct a fixed point 
for the next interval $[T_{n-1},T_n := T_{n-1} + \Delta T_n]$ on the map $u=\Phi(v)$
$$
 \left\{
\begin{aligned}
& \e \dt u + \da u = \tig_v(t), & t\in(T_{n-1},T_n) , a>0,\\
& u(t,0)=0,& t\in(T_{n-1},T_n),\\
&u(T_{n-1},a)= w(T_{n-1},a)& a>0.\\
\end{aligned}
\right.
$$
 and
$$
 \left\{
\begin{aligned}
& \e \dt \rho+ \da \rho + \zeta(v) \rho = 0 , & t\in(T_{n-1},T_n), a>0,\\
& \rho(t,0)=\beps (t)\left(1-\int_{\rr} \rho(t,a) da\right) ,&t\in(T_{n-1},T_n),\\
& \rho(T_{n-1},a) = \rhow(T_{n-1},a),& a>0.\\
\end{aligned}
\right.
$$ 
If we denote the extensions to $[0,T_{n}]$ of $(\rho,u)$ as :
$$
\rho_e(t,a) := 
\begin{cases}
\rhow_{\ti{w}}(t,a) & \text{ if } t\in[T_{n-1},T_n) \\
\rhow (t,a) & t\in(0,T_{n-1}]
\end{cases}, \quad
w_e := \begin{cases}
\ti{w}(t,a) & \text{ if } t\in[T_{n-1},T_n) \\
w (t,a) & t\in(0,T_{n-1}]
\end{cases},
$$
where $\ti{w}=\Phi(\ti{w})$ and $(\rhow,w)$ is the unique solution of \eqref{eq.rho.mod}-\eqref{eq.w.mod} on $[0,T_{n-1}]$. 
The continuity of $\rho_e$ allows to apply   Lemma \ref{lem.higher.moments_bounds}.
Similarly  for $w_e$ one has 
$$
\begin{aligned}
 \nrm{w_e}{X_{T_n}} & \leq \nrm{\tigw(t) \chiu{[T_{n-1},T_n)} + \tig_{\weps} \chiu{[0,T_{n-1}]}}{X_{T_n}} + \nrm{\vepsi}{L^\infty_\omega(\RR)} \\
 & \leq \frac{(\e \nrm{\dt f}{L^\infty(0,T_n)} + \ovp)}{\umu} + \nrm{\vepsi}{L^\infty_\omega(\RR)}.
\end{aligned}
$$
where $\chiu{A}$ is the characteristic function of the set $A$, and we used the uniform estimate on $\tig_w$ provided by \eqref{eq.tig}.
These estimates prove that the constant $\ovc$ in the contraction in b) is not changing as time evolves.
Thus we can fix-point again choosing $\Delta T_n$ as in the previous paragraph and prove contraction
in $[T_{n-1},T_n]$. At this step the recursion is complete. The theorem is proven for any positive time.
\end{enumerate}
\end{proof}
\begin{corollary}\label{coro.stab.rho.u.mod} 
Under the same hypotheses as above, 
 for any pair of positive definite reals $(\umu,\ovp)$, the solution-pair $(\rhow_w,w)$ solving \eqref{eq.rho.mod}-\eqref{eq.w.mod} satisfies the {\em a priori} estimates \eqref{equ.stab.rho.u}.
\end{corollary}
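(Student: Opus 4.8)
The plan is to carry the computation that produced the a priori estimate \eqref{equ.stab.rho.u} in \cite{MiOel.2} (Lemma 4.1) over to the cut-off system, checking that the truncations hidden in $\tigw$ do not destroy the exact cancellation that argument relies on. I would set $m(t):=\int_{\rr}\rhow_w(t,a)\,|w(t,a)|\,da$ and, abbreviating $P(t):=\int_{\rr}\zeta(w)\,\rhow_w\,w\,da$ for the integral term in \eqref{eq.g.mod} and $\Pi(t):=\max(-\ovp,\min(\ovp,P(t)))$ for its truncation, compute formally from \eqref{eq.rho.mod} and \eqref{eq.w.mod}
$$
\e\,\dt\!\bigl(\rhow_w|w|\bigr)+\da\!\bigl(\rhow_w|w|\bigr)=-\zeta(w)\,\rhow_w|w|+\rhow_w\,\mathrm{sgn}(w)\,\tigw\leq -\zeta(w)\,\rhow_w|w|+\frac{\rhow_w}{\max(\muze,\umu)}\bigl(\e|\dt f|+|\Pi|\bigr).
$$
Integrating in age and using $w(t,0)=0$ to kill the flux term, this gives
$$
\e\,\dt m(t)\leq -\int_{\rr}\zeta(w)\,\rhow_w|w|\,da+\frac{\muze(t)}{\max(\muze(t),\umu)}\bigl(\e|\dt f(t)|+|\Pi(t)|\bigr).
$$

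The crux is that all off-rate contributions are then absorbed, via three elementary inequalities: $\muze\leq\max(\muze,\umu)$ makes the prefactor $\leq1$; since $\rhow_w\geq0$ and $\zeta\geq\ztmin>0$ (Assumption \ref{ass.zeta}), the triangle inequality yields $\int_{\rr}\zeta(w)\,\rhow_w|w|\,da\geq|P(t)|$; and the truncation is non-expansive at the origin, so $|\Pi(t)|\leq|P(t)|$. Combining these, $\e\,\dt m(t)\leq -|P(t)|+\e|\dt f(t)|+|\Pi(t)|\leq\e|\dt f(t)|$, and integrating in time with $m(0)=\int_{\rr}\rhoi(a)\,|\vepsi(a)|\,da$ gives exactly \eqref{equ.stab.rho.u}. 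Note that the final bound does not involve $\umu$ or $\ovp$, which is precisely the uniformity in the cut-off parameters claimed by the corollary.

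What remains is bookkeeping rather than a real obstacle. The manipulations of $\dt|w|$ and $\da|w|$ are only formal; I would make them rigorous exactly as in \cite{MiOel.2}, by replacing $|w|$ with $\sqrt{w^2+\delta^2}-\delta$ (so $\mathrm{sgn}(w)$ becomes $w/\sqrt{w^2+\delta^2}$, still of modulus $\le1$), running the same computation, and letting $\delta\to0$ by dominated convergence — legitimate because the moment bounds of Lemma \ref{lem.higher.moments_bounds} ($p=0,1$) together with $|w(t,a)|\le C_t(1+a)$ from $w\in X_T$ (Theorem \ref{thm.sys.mod}) control the relevant integrals; alternatively one integrates directly along the characteristics of \eqref{eq.rho.mod}–\eqref{eq.w.mod} using \eqref{rho_model_by_characteristics}. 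The only genuinely delicate point is getting the three inequalities above to point the right way: it is $|\Pi|\leq|P|$ together with $\muze/\max(\muze,\umu)\leq1$ that preserve the exact cancellation available in the un-truncated case.
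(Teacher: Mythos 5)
Your argument is correct and is essentially the paper's own proof: both multiply the transport inequality for $|w|$ by $\rhow_w$, integrate in age, and let the dissipative term $\int_{\rr}\zeta(w)\rhow_w|w|\,da$ absorb the contribution of the truncated source, the only (cosmetic) difference being that you keep the truncation $\Pi$ explicit and discharge it via $|\Pi|\le|P|\le\int_{\rr}\zeta(w)\rhow_w|w|\,da$ together with $\muze/\max(\muze,\umu)\le 1$, whereas the paper first bounds $|\tig_w|$ by $\frac{1}{\muze}\{\e|\dt f|+\int_{\rr}\zeta(w)\rhow_w|w|\,da\}$ and then cancels identical integrals on both sides.
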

\begin{proof}
 We use that 
 $$
\begin{aligned}
  | \tig_w(t) |& \leq \frac{1}{\muze(t)}\left\{\e | \dt f |+ \min\left(\ovp, \left| \min \left( \int_{\rr} \zeta(w) w \rhow_w  da  , \ovp \right) \right|\right)\right\} \\
& \leq \frac{1}{\muze(t)}\left\{\e | \dt f |+ \min\left(\ovp, \int_{\rr} \zeta(w) \rhow_w |w| da\right)\right\}   
 \leq \frac{1}{\muze(t)}\left\{\e | \dt f |+ \int_{\rr} \zeta(w) \rhow_w |w| da \right\}.
\end{aligned}
 $$
 Then  the same arguments as in the proof of Lemma 5.1 in \cite{MiOel.2} provide the {\em a priori} estimates.
 Indeed in the sense of characteristics $| w|$ satisfies :
 $$
 \e \dt | w | + \da | w | \leq | \tig_w | \leq \frac{1}{\muze(t)}\left\{\e | \dt f |+ \int_{\rr} \zeta(w) \rhow_w |w| da \right\}.
 $$
 then multiplying the later inequality by $\rhow_w$ and integrating with respect to age, one gets :
 $$
 \e \dt \int_{\rr} \rhow_w | w | da + \int_{\rr} \zeta(w) | w | \rhow_w da \leq \e | \dt f | + \int_{\rr} \zeta(w) | w | \rhow_w da
 $$ 
 and because on the right and on the left hand sides the same integral terms cancel, the claim follows.
\end{proof}

\begin{proposition}\label{prop.borne.zt.veps.rhoe}
 Under Assumptions \ref{ass.zeta}, \ref{hypo.data.deux}  and \ref{ass.beta}, let 
 $(\rhow_w,w)$ be the solution of the fully coupled and stabilized problem  \eqref{eq.rho.mod}-\eqref{eq.w.mod}-\eqref{eq.g.mod},
   there exists a positive finite constant $\gamma_2$ s.t. 
 $$
 \int_{\rr} \zteps(w(t,a)) |w(t,a) | \rhow_w(t,a) da \leq \frac{\gamma_2}{\umu} ,\quad \forall t \geq 0 \; ,
 $$ 
 where the constant $\gamma_2$ is depends on
\begin{itemize}
 \item  the {\em a priori} bound only on $\int_{\rr} \rhow_w |w| da$ (obtained in Corollary \ref{coro.stab.rho.u.mod}) ,
 \item  $\nrm{\dt f}{L^\infty(0,T)}$, 
 \item $\ztlip$, and  $\zeta(0)$.
 \end{itemize}
\end{proposition}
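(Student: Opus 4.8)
The plan is to close an a priori estimate on $q(t):=\int_{\rr}\zeta(w)\,|w|\,\rhow_w\,da$ by producing a differential inequality that is \emph{superlinearly} dissipative, so that a scalar comparison argument controls $q$ and, crucially, with the right power of $\umu$. First I would set $\Theta(u):=\zeta(u)|u|$; since $\zeta$ is globally Lipschitz with $|\zeta'|\le\cz$, one has $\zeta(u)\le\zeta(0)+\cz|u|$ and $|\Theta'(u)|\le 2\cz|u|+\zeta(0)$ wherever $\Theta$ is differentiable, whence, arguing along the characteristics of \eqref{eq.w.mod} and treating $\Theta(w)$ exactly as $|w|$ is treated in the proof of Corollary~\ref{coro.stab.rho.u.mod}, $\e\dt\Theta(w)+\da\Theta(w)\le\big(2\cz|w|+\zeta(0)\big)|\tigw(t)|$ with $\Theta(w)(t,0)=0$. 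Multiplying by $\rhow_w\ge 0$, substituting $\e\dt\rhow_w=-\da\rhow_w-\zeta(w)\rhow_w$ from \eqref{eq.rho.mod}, and integrating in age (the flux term at $a=0$ vanishes because $\Theta(w(t,0))=\zeta(0)\cdot 0=0$, the one at $a=\infty$ by decay of $\rhow_w$), this turns into
$$\e\,\dt q+\int_{\rr}\zeta(w)^2\,|w|\,\rhow_w\,da\ \le\ |\tigw(t)|\int_{\rr}\big(2\cz|w|+\zeta(0)\big)\rhow_w\,da\,.$$

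Next I would estimate the three terms. On the right, $\muze<1$ together with Corollary~\ref{coro.stab.rho.u.mod} ($\int_{\rr}|w|\rhow_w\,da\le M_1$, the a priori bound furnished there) gives $\int_{\rr}\big(2\cz|w|+\zeta(0)\big)\rhow_w\,da\le K:=2\cz M_1+\zeta(0)$; and since the truncation in \eqref{eq.g.mod} has modulus at most $\big|\int_{\rr}\zeta(w)w\rhow_w\,da\big|\le q(t)$, one has $|\tigw(t)|\le\frac{1}{\umu}\big(\e\nrm{\dt f}{L^\infty(0,T)}+q(t)\big)$ --- it is essential here to use this bound and not the cruder one involving $\ovp$. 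On the left, the Cauchy--Schwarz splitting $\zeta(w)|w|\rhow_w=\big(\zeta(w)\sqrt{|w|\rhow_w}\big)\big(\sqrt{|w|\rhow_w}\big)$ yields $\int_{\rr}\zeta(w)^2|w|\rhow_w\,da\ge q(t)^2/M_1$. Inserting these bounds and using Young's inequality to absorb the linear feedback $\frac{K}{\umu}q$ into $\frac{q^2}{2M_1}$, I would arrive at
$$\e\,\dt q+\frac{q^2}{2M_1}\ \le\ \frac{K\,\e\,\nrm{\dt f}{L^\infty(0,T)}}{\umu}+\frac{K^2M_1}{2\umu^2}\ =:\ C\,.$$

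To conclude I would compare with the scalar ODE $\e\,\dot y=C-y^2/(2M_1)$, which shows $q(t)\le\max\big(q(0),\sqrt{2M_1C}\big)$ on $[0,T]$; since $\sqrt{2M_1C}\le\gamma_2/\umu$ for a constant $\gamma_2$ depending only on $M_1$, $\cz$, $\zeta(0)$ and $\nrm{\dt f}{L^\infty(0,T)}$, and $q(0)=\int_{\rr}\zeta(\vepsi)|\vepsi|\rhoi\,da$ is finite by Assumption~\ref{hypo.data.deux} and $\vepsi\in L^\infty(\rr,\omega)$, this gives $q(t)\le\gamma_2/\umu$ (enlarging $\gamma_2$ to also dominate $\umu\,q(0)$ if needed). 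The hard part is precisely this mechanism: no cheap bound on $\tigw$ closes the estimate, so one is forced to keep $|\tigw|$ controlled by $q$ itself and to extract from the age integration the genuinely quadratic dissipation $\int_{\rr}\zeta(w)^2|w|\rhow_w\,da\ge q^2/M_1$, which then overrules the linear self-coupling and, thanks to Young's inequality, produces the $O(1/\umu)$ scaling; a secondary difficulty, handled as in \cite{MiOel.2} by regularisation followed by passage to the limit, is the rigorous justification of the chain rule for the non-smooth map $\Theta(w)=\zeta(w)|w|$ and of the integration in age.
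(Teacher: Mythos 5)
Your proposal is correct and follows essentially the same route as the paper's proof: a transport identity for $\rhow_w\,|w|\,\zeta(w)$, the bound $|\tigw|\le\frac{1}{\umu}(\e|\dt f|+q)$, the Jensen/Cauchy--Schwarz lower bound $\int_{\rr}\zeta(w)^2|w|\rhow_w\,da\ge q^2/M_1$, and a Riccati-type comparison. The only cosmetic difference is that you absorb the linear term via Young's inequality before comparing, whereas the paper keeps it and invokes its Riccati lemma (Lemma \ref{lem.ric}) directly; both yield the same $O(1/\umu)$ bound (under the same implicit normalisation $\umu\le 1$ that the paper also uses).
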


\begin{proof}
 Using equations \eqref{eq.rho.mod}, \eqref{eq.w.mod} and hypotheses \ref{ass.zeta}, one has
$$
\e \dt (\rhow_w | w | \zteps )  + \da (\rhow_w | w | \zteps ) +  \zteps^2 | w | \rhow_w \leq 
 \rhow_w |w| ( \e \dt \zteps + \da \zteps )  
 +  \zteps \rhow_w | \tig_w | \; .
$$
 Integrating in age and setting $p(t):=\int_{\rr} \rhow_w(t,a) | w(t,a) | \zteps(w(t,a)) \, da$ gives 
$$
\begin{aligned}
  \e \dt p + \int_{\rr} \zteps^2  | w(t,a) | \rhow_w(t,a)  da & \leq  |\tig_w| \left(  \ztlip\int_{\rr} \rhow_w |w| \, da
  + \int_{\rr} \zeta(w) \rhow_w(t,a) da \right) \\
  & \leq  |\tig_w| \left(  2 \ztlip\int_{\rr} \rhow_w |w| \, da  + \zeta(0) \right) \\
  &  \leq \frac{1}{\umu}( \e | \dt f | + p ) \left(  2 \ztlip / \gamma_1  + \zeta(0) \right), 
\end{aligned}
$$
where $\int_{\rr} \rhow_w | w | da \leq 1/\gamma_1$. 
Now we consider the second term in the left hand side above: using Jensen's inequality one writes
$$
\left( \frac{ \int_{\rr} \zeta(w) | w(t,a) | \rhow_w(t,a) da}{\int_{\rr} | w | \rhow_w da }\right)^2 \leq \frac{ \int_{\rr} (\zteps(w))^2 | w(t,a) | \rhow_w(t,a) da}{\int_{\rr} | w | \rhow_w da} \;,
$$
since $| w| \rhow_w / \int_{\rr} |w| \rhow_w \, da $ is a unit measure. This implies that
$$
\int_{\rr} (\zteps(w))^2 | w(t,a) | \rhow_w(t,a) \; da 
\geq \frac{ \left( \int_{\rr} \zeta(w) | w(t,a) | \rhow_w(t,a) da \right)^2 }{\int_{\rr}  |w| \rhow_w da } 
\geq \gamma_1 p^2 \; .
$$
We obtain a Riccati inequality
$$
\e \dt p + \gamma_1 p^2 \leq h/\umu +  p/ \umu \;,\quad p(0) = \int_{\rr} \zteps(\vepsi(a))|\vepsi(a)|\rhoi(a)\; da \;, 
$$
where $h:= \e  \nrm{\dt f}{\infty} \left(  2 \ztlip / \gamma_1  + \zeta(0) \right)$ is a constant. 
We denote by $P_\pm$ the  solutions 
of the steady state equation associated to the last inequality, {\em i.e.} $P$ solves  $\gamma_1 P^2 -  P/\umu -h/\umu = 0$. 
The solutions are given by 
$$
P_\pm = \frac{1}{\umu}\left(  1 \pm \sqrt{1  + 4 h \umu \gamma_1}\right)/(2\gamma_1) \leq \frac{1}{\umu}\max\left( p(0), \left(  1 \pm \sqrt{1  + 4 h  \gamma_1}\right)/(2\gamma_1)\right)  =: \frac{\gamma_2}{\umu}.
$$
 Applying Lemma \ref{lem.ric},
we conclude that $p(t) \leq \max\{p(0), P_+\}\leq \gamma_2/\umu$,
which ends the proof.
\end{proof}

\begin{theorem}\label{thm.simple.cut.off}
 Suppose that Assumptions \ref{ass.zeta}, \ref{hypo.data.deux} and \ref{ass.beta} hold, 
 moreover, suppose that $\vepsi \in L^\infty(\rr,\omega)$ and that $\nrm{\dt f}{L^\infty(0,T)}$ is finite,
 if  $(\rhow_w,w)$ is  the unique  solution of the stabilized problem  \eqref{eq.rho.mod}-\eqref{eq.w.mod}-\eqref{eq.g.mod},
 it is also the unique solution of   \eqref{eq.rho.mod}-\eqref{eq.w.mod} together with the modified right hand side :
\begin{equation}\label{eq.cut.off.simple}
\titig_w =  \frac{1}{\max(\mu_{0,w}, \umu)}\left( \e \dt f + \int_{\rr}  \zeta(w) w \rhow_w \,  da \right)  \; .
\end{equation}
\end{theorem}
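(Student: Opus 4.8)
The strategy is to show that, as soon as $\ovp$ is taken large enough with respect to $\umu$ and the data, the inner truncation $\max(-\ovp,\min(\ovp,\cdot))$ in \eqref{eq.g.mod} is never active along a solution, so that the two right-hand sides $\tigw$ and $\titig_w$ coincide on it. First I would invoke Proposition \ref{prop.borne.zt.veps.rhoe}, which applied to the solution $(\rhow_w,w)$ of the doubly-stabilised problem yields
$$
\int_{\rr} \zeta(w(t,a))\,|w(t,a)|\,\rhow_w(t,a)\,da \;\le\; \frac{\gamma_2}{\umu}\,, \qquad \forall\, t\ge 0\,.
$$
The essential observation is that $\gamma_2$ depends only on $\nrm{\dt f}{L^\infty(0,T)}$, on $\zeta(0)$, on the Lipschitz constant of $\zeta$, and on the {\em a priori} bound \eqref{equ.stab.rho.u} which Corollary \ref{coro.stab.rho.u.mod} transfers to the stabilised solution --- none of these quantities involving $\ovp$. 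One may therefore fix $\ovp$ subject to $\ovp > \gamma_2/\umu$ with no circularity; this is the implicit standing requirement behind the statement.

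With such a choice of $\ovp$, for every $t\ge 0$ one has
$$
\Big| \int_{\rr} \zeta(w)\,w\,\rhow_w\,da \Big| \;\le\; \int_{\rr} \zeta(w)\,|w|\,\rhow_w\,da \;\le\; \frac{\gamma_2}{\umu} \;<\; \ovp \,,
$$
so the map $B\mapsto\max(-\ovp,\min(\ovp,B))$ acts as the identity at $B=\int_{\rr}\zeta(w)\,w\,\rhow_w\,da$, whence $\tigw(t)=\titig_w(t)$ for all $t$. Consequently the pair $(\rhow_w,w)$ provided by Theorem \ref{thm.sys.mod} also solves \eqref{eq.rho.mod}--\eqref{eq.w.mod} with the right-hand side $\titig_w$ of \eqref{eq.cut.off.simple}, which settles existence.

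For uniqueness I would start from an arbitrary solution $(\rho',w')$ of \eqref{eq.rho.mod}--\eqref{eq.w.mod} coupled through $\titig_{w'}$, and note that $|\titig_{w'}(t)| \le \frac{1}{\mu_{0,w'}(t)}\big(\e|\dt f| + \int_{\rr}\zeta(w')|w'|\rho'\,da\big)$ --- precisely the pointwise bound used in the proof of Corollary \ref{coro.stab.rho.u.mod}. That proof then applies verbatim and gives the {\em a priori} estimate \eqref{equ.stab.rho.u} for $(\rho',w')$, and feeding it into the Riccati argument of Proposition \ref{prop.borne.zt.veps.rhoe} --- which uses nothing beyond this estimate, Assumption \ref{ass.zeta} and the same pointwise bound --- yields $\int_{\rr}\zeta(w')|w'|\rho'\,da \le \gamma_2/\umu < \ovp$ with the same $\gamma_2$. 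Hence the inner truncation is inactive for $(\rho',w')$ too, so $\titig_{w'}=\tig_{w'}$ and $(\rho',w')$ is in fact a solution of the doubly-stabilised problem \eqref{eq.rho.mod}--\eqref{eq.w.mod}--\eqref{eq.g.mod}; the uniqueness part of Theorem \ref{thm.sys.mod} then forces $(\rho',w')=(\rhow_w,w)$.

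The only genuinely delicate point is the bookkeeping in the first and third steps: one must make sure that every constant produced by Corollary \ref{coro.stab.rho.u.mod} and Proposition \ref{prop.borne.zt.veps.rhoe} is independent of $\ovp$, so that the requirement $\ovp > \gamma_2/\umu$ is not self-referential, and that the {\em a priori} bounds those two results establish for the doubly-stabilised solution are inherited, with the same constants, by any candidate solution of the single-cut-off problem. Both are true because the relevant proofs only ever use the majorisation $|\,\mathrm{r.h.s.}\,| \le \frac{1}{\muze}\big(\e|\dt f| + \int_{\rr}\zeta(w)\,|w|\,\rhow_w\,da\big)$, which is satisfied by $\tigw$ and $\titig_w$ alike; everything else is routine.
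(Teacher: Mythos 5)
Your proposal is correct and follows the same route as the paper's (very terse) proof: invoke Proposition \ref{prop.borne.zt.veps.rhoe} to get $\int_{\rr}\zeta(w)|w|\rhow_w\,da\le\gamma_2/\umu$ and choose $\ovp>\gamma_2/\umu$ so that the inner truncation in \eqref{eq.g.mod} is never active. You additionally spell out two points the paper leaves implicit --- that $\gamma_2$ does not depend on $\ovp$ (so the choice is not circular) and the uniqueness direction, obtained by running the same \emph{a priori} bounds on an arbitrary solution of the single-cut-off problem to show it also solves the doubly-stabilised one --- both of which are correct and worthwhile additions.
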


\begin{proof}
 The proof is a simple application of the Proposition \ref{prop.borne.zt.veps.rhoe}
 above and taking $\ovp > \gamma_2 /\umu$ when solving 
  \eqref{eq.rho.mod}-\eqref{eq.w.mod}-\eqref{eq.g.mod}. Indeed, in this case one has that
  the truncated right hand side from \eqref{eq.g.mod} becomes \eqref{eq.cut.off.simple},  one since $p(t) := \int_{\rr} \zeta(w) w \rhow_w da$ 
  never reaches  $\pm \ovp$. 
\end{proof}

\section{Impact of the cut-off value on the mean bonds' population
}\label{sec.intermediate}

In this section we give ourselves a function $g\in L^\infty(0,T)$ and compute $w\in X_T$ solving \eqref{eq.w.eps}.
In what follows we analyze the properties of an age structured model for linkages whose off-rates depend on $w$~:
we define $\rhow_w$ as the solution of 
\begin{equation}
\label{eq.rho.w} 
\left\{ 
\begin{aligned} 
&\varepsilon \partial_t \rhow_w + \partial_a \rhow_w+ \zeta(w) \, \rhow_w = 0 
\,, &t>0 \, , \;a > 0 \; , \\ 
& \rhow_w(t,a=0)=\beps(t)\left(1-\mu_{0,w}  \right) 
\, , & t > 0 \; , \\
& \rhow_w(t=0,a)=\rhoi(a)
\, ,&a\geq 0  \; , 
\end{aligned}  
\right. 
\end{equation}
where $\mu_{0,w}(t):=\int_{\rr }\rhow_w(t,\tia)d\tia$.


We compute  a sharper upper bound on $\mu_{0,w}$, namely
\begin{lemma}\label{lem.moy.zt}
Let Assumptions \ref{ass.zeta} and \ref{ass.beta} hold.
Let $\rhow_w$ be the solution of \eqref{eq.rho.w}.
We suppose that $\mu_{0,w}(0)<1$.
Let us fix  a positive constant $\gamma_0$ s.t. 
$$
\gamma_0 < \min\left( 1-\mu_{0,w}(0) , \frac{ \ztmin}{ \ztmin + \bmax }  \right).
$$
 Under Assumptions \ref{ass.zeta}, \ref{hypo.data.deux}  and \ref{ass.beta}, 
 $\mu_{0,w}(t)< 1-\gamma_0$ holds for every positive time $t$.
\end{lemma}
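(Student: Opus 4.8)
The plan is to derive a differential inequality for $\mu_{0,w}(t)$ by integrating the age-structured equation \eqref{eq.rho.w} over $a \in (0,\infty)$. Integrating the first line of \eqref{eq.rho.w} in age and using the boundary condition at $a=0$ together with the decay of $\rhow_w$ as $a\to\infty$ (which follows from the moment bounds of Lemma \ref{lem.higher.moments_bounds}), one obtains
\[
\e \dt \mu_{0,w} = \rhow_w(t,0) - \int_{\rr} \zeta(w)\rhow_w\,da = \beps(t)\bigl(1-\mu_{0,w}\bigr) - \int_{\rr}\zeta(w)\rhow_w\,da.
\]
Using Assumption \ref{ass.zeta} a), $\int_{\rr}\zeta(w)\rhow_w\,da \geq \ztmin\,\mu_{0,w}$, and Assumption \ref{ass.beta} b), $\beps(t)\le\bmax$, this gives the upper differential inequality
\[
\e\dt\mu_{0,w} \leq \bmax\bigl(1-\mu_{0,w}\bigr) - \ztmin\,\mu_{0,w} = \bmax - (\bmax+\ztmin)\,\mu_{0,w}.
\]

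Next I would analyze this scalar inequality. The right-hand side vanishes at the equilibrium value $\mu^* := \frac{\bmax}{\bmax+\ztmin} = 1 - \frac{\ztmin}{\bmax+\ztmin}$, and the coefficient of $\mu_{0,w}$ is negative, so $\mu_{0,w}$ is driven towards (or below) $\mu^*$. Concretely, set $\phi(t) := 1 - \gamma_0 - \mu_{0,w}(t)$; the choice of $\gamma_0$ ensures $1-\gamma_0 > 1 - \frac{\ztmin}{\bmax+\ztmin} = \mu^*$, hence $\bmax - (\bmax+\ztmin)(1-\gamma_0) < 0$, so from the inequality above
\[
\e\dt\phi = -\e\dt\mu_{0,w} \geq (\bmax+\ztmin)\mu_{0,w} - \bmax = -(\bmax+\ztmin)\phi + \bigl[(\bmax+\ztmin)(1-\gamma_0)-\bmax\bigr],
\]
and the bracketed constant is strictly positive. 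A Gronwall-type argument (or simply checking that $\phi$ cannot cross $0$ from positive to non-positive, since at any point where $\phi=0$ one would have $\e\dt\phi > 0$) then shows that $\phi(t)$ stays strictly positive for all $t>0$, given that $\phi(0) = 1-\gamma_0-\mu_{0,w}(0) > 0$ by the constraint $\gamma_0 < 1 - \mu_{0,w}(0)$. This is exactly the claim $\mu_{0,w}(t) < 1-\gamma_0$.

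The main technical point to be careful about is the justification of the integration by parts in age, i.e. that the boundary term at $a=\infty$ genuinely vanishes and that $\mu_{0,w}$ is differentiable (or at least absolutely continuous) in $t$ so that the differential inequality makes rigorous sense. This is handled by invoking the representation \eqref{rho_model_by_characteristics} and the finite-moment bounds from Assumption \ref{hypo.data.deux} and Lemma \ref{lem.higher.moments_bounds}, which give enough decay in $a$; alternatively one can work with the weak formulation \eqref{equ_rho_weak} and test against functions approximating $\mathbf{1}_{a\in\rr}$. A second, minor subtlety is that $\zeta(w)$ need not be bounded above, but only the lower bound $\ztmin$ is used here, so Assumption \ref{ass.zeta} a) suffices; the argument is otherwise entirely elementary and does not require any control of $w$ beyond $w\in X_T$ guaranteeing that $\rhow_w$ is well-defined via Theorem \ref{prop.rho.exist}.
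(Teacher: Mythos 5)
Your proof is correct and follows essentially the same route as the paper: integrate \eqref{eq.rho.w} in age, bound $\int\zeta(w)\rhow_w\,da\geq\ztmin\mu_{0,w}$ and $\beps\leq\bmax$, and run a barrier/Gronwall argument on $\phi=1-\gamma_0-\mu_{0,w}$ (the paper's $q$), with the same role for the two constraints on $\gamma_0$. The only caveat is that the step $\beps(1-\mu_{0,w})\leq\bmax(1-\mu_{0,w})$ requires $\mu_{0,w}\leq 1$, which is guaranteed on the region $\phi\geq0$ where your barrier argument actually uses it (and globally by the paper's earlier lemma), so nothing is lost.
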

\begin{proof}
 We proceed similarly as in Lemma 2.2 in \cite{MiOel.1}.
 The computations are thus only formal although  
 they can be made rigorous exactly as therein. 
 By hypothesis, the  data satisfies $1-\gamma_0-\mu_{0,w}(0) >0$. 
 By continuity this also holds on a  time interval  
 $[0,t_0)$ small enough. 
 We proceed by contradiction and suppose that at time $t_0$ the mass $\mu_{0,w}(t_0)$ reaches $1-\gamma_0$.
 The equation on $\mu_{0,w}$ reads:
 $$
 \e \dt \mu_{0,w} - \beps (1- \mu_{0,w}) + \int_{\rr} \rhow_w(t,a) \zeta(w(t,a)) da = 0\;.
 $$
Multiplying it by $-1$ and estimating $\beps(t)\leq\bmax$, one deduces that
 $$
 \e \dt (1- \gamma_0 - \mu_{0,w})+ \bmax (1 - \gamma_0 - \mu_{0,w} ) + \gamma_0 \bmax - \int_{\rr} \rhow_w(t,a) \zeta(w(t,a)) da \geq 0 \; ,
 $$
 then  the lower bound on $\zteps$ implies
 $$
 \e \dt (1- \gamma_0 - \mu_{0,w})+ \bmax (1 - \gamma_0 - \mu_{0,w} ) + \gamma_0 \bmax  \geq \ztmin \mu_{0,w} \;.
 $$
 We transform the latter right hand side writing
 $$
 \ztmin \mu_{0,w} = -\ztmin (1-\gamma_0 - \mu_{0,w}) + \ztmin (1-\gamma_0) \; .
 $$
 Setting $q(t):=(1-\gamma_0 - \mu_{0,w}(t))$, one then has
 $$
 \e \dt q + (\ztmin+ \bmax) q \geq \ztmin - (\ztmin+ \bmax) \gamma_0 > 0 \; ,
 $$
the latter estimate being true under the hypothesis that $\gamma_0 < \ztmin/(\ztmin+ \bmax)$. 
The conclusion then follows integrating the latter inequality in time
$$
q(t_0) >  \exp\left( - \frac{(\bmax+\ztmin)t_0}{\e} \right) q(0) > 0,
$$
under the hypothesis that $\gamma_0 < (1-\mu_{0,w}(0))$. 
But this contradicts the assumption that $q(t_0)=0$, which ends the proof.
\end{proof}

We do not have  a positive definite lower bound on $\mu_{0,w}$ yet ~:
at this stage we only know that $\mu_{0,w}(t)\geq 0$. 
For this reason we define
$\trhoed(t,a) := \rhow(t,a)/(\mu_{0,w}(t)+ \delta)$ and 
we observe that this new function is in $L^\infty_\loc((0,T)\times\rr)$. It solves the equation
\begin{equation}
\label{eq.trho.eps} 
\left\{ 
\begin{aligned} 
&\varepsilon \partial_t \trhoed + \partial_a \trhoed
 + \left( \zteps - \int_{\rr} \zteps \trhoed \right) \trhoed \\
&\hspace{1.95cm}+ \beps\left( \frac{1}{\mu_{0,w} + \delta} - \frac{\mu_{0,w}}{\mu_{0,w}+\delta}\right) \trhoed = 0 
\,, &t>0 \, , \;a > 0 \; , \\ 
& \trhoed(t,a=0)=\beps(t)\left(\frac{1}{\mu_{0,w}+\delta}- \frac{\mu_{0,w}}{\mu_{0,w}+\delta}\right)\, , & t > 0 \; , \\
& \trhoed(t=0,a)=\rhoi(a) / ( \mu_{0,w}+ \delta) \, ,&a\geq 0  \; . 
\end{aligned}  
\right. 
\end{equation}
When $\delta=0$ one denotes $\trhoed$ simply by $\trhow$.
\begin{proposition}\label{prop.moy.zt}
Let $g\in L^\infty(0,T)$ be given, and let $(\rhow,w)$ be the solutions of \eqref{eq.rho.w}-\eqref{eq.w.eps}.
 Under Assumptions \ref{ass.zeta}, \ref{hypo.data.deux}  and \ref{ass.beta} and if $\mu_{0,w}(0) \leq 1 - \gamma_0$, 
  there exists a constant $\bar \zeta$  independent of $\delta$ and $\e$ such that for every positive $\delta$ it
holds that
 $$
 \int_{\rr} \zeta(w(t,a)) \, \trhoed(t,a) da \leq \bar \zeta + \ztlip  \, \nrm{g}{L^\infty(0,T)} \, \min\left(\frac{2}{\gamma_0 \bmin}, \frac{T}{\e}\right) \; ,\quad \forall t \geq 0,
 $$
where
$
\bar \zeta := \zeta(0)+ \int_{\rr} \zeta(\vepsi(a)) \ti{\rho}_{\e,I}  (a) da
$.
Taking the limit as $\delta$ goes to $0$, one obtains then the analogous result for $\trhow$. 
\end{proposition}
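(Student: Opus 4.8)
\smallskip
\noindent\textit{Proof plan.}
The plan is to reduce the whole statement to a single scalar differential inequality for
\[
n_\delta(t) := \int_{\rr} \zeta(w(t,a))\,\trhoed(t,a)\,da
\]
and then to close it by a Gronwall argument whose source term is controlled in two complementary ways: a dissipative estimate, based on the lower bound for the renewal coefficient from Lemma~\ref{lem.moy.zt}, and a crude estimate obtained by simply integrating $|g|$ in time. These two bounds are exactly what produces the minimum appearing in the statement.

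First I would write down the transport equation solved by $\zeta(w)\,\trhoed$. Since $w$ solves \eqref{eq.w.eps}, one has $\e\dt\zeta(w)+\da\zeta(w)=\zeta'(w)\,g$ along characteristics; multiplying \eqref{eq.trho.eps} by $\zeta(w)$ and adding gives, at least formally (the computation being justified through a weak formulation exactly as in \cite{MiOel.1}, cf.\ the proof of Lemma~\ref{lem.moy.zt}),
\[
\e\dt\bigl(\zeta(w)\trhoed\bigr)+\da\bigl(\zeta(w)\trhoed\bigr)
= -\,\zeta(w)^2\trhoed + \Bigl(\int_{\rr}\zeta(w)\trhoed\,d\tia\Bigr)\zeta(w)\trhoed - c_\delta\,\zeta(w)\trhoed + g\,\zeta'(w)\,\trhoed ,
\]
where $c_\delta(t):=\beps(t)\,\frac{1-\mu_{0,w}(t)}{\mu_{0,w}(t)+\delta}$ is precisely the boundary value $\trhoed(t,a=0)$. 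Integrating in age, using $w(t,0)=0$ so that the age-flux at $a=0$ contributes $+\,\zeta(0)\,c_\delta(t)$, and using $\int_{\rr}\trhoed\,d\tia=\mu_{0,w}/(\mu_{0,w}+\delta)\le 1$, the two quadratic contributions collapse: Cauchy--Schwarz gives $\int_{\rr}\zeta(w)^2\trhoed\,d\tia\ge n_\delta^2$, so that $n_\delta^2-\int_{\rr}\zeta(w)^2\trhoed\,d\tia\le 0$, while $|\zeta'|\le\ztlip$ gives $\bigl|g\int_{\rr}\zeta'(w)\trhoed\,d\tia\bigr|\le\ztlip\,|g(t)|$. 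What remains is
\[
\e\dt n_\delta + c_\delta\,n_\delta \le \zeta(0)\,c_\delta + \ztlip\,|g(t)| .
\]

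Then I would integrate this linear inequality with the integrating factor $\exp\!\bigl(\frac1\e\int_0^t c_\delta\bigr)$. The part carrying the initial datum together with the $\zeta(0)\,c_\delta$ term assembles into a convex combination of $n_\delta(0)$ and $\zeta(0)$, hence is at most $\bar\zeta$, because $n_\delta(0)=\int_{\rr}\zeta(\vepsi)\,\rhoi\,(\mu_{0,w}(0)+\delta)^{-1}\,d\tia\le\int_{\rr}\zeta(\vepsi)\,\ti{\rho}_{\e,I}\,d\tia\le\bar\zeta$ and $\zeta(0)\le\bar\zeta$. For the remaining term $\frac{\ztlip}{\e}\int_0^t|g(s)|\exp\!\bigl(-\frac1\e\int_s^t c_\delta\bigr)\,ds$ I would use two bounds: bounding the exponential by $1$ gives $\ztlip\,\nrm{g}{L^\infty(0,T)}\,T/\e$; and using $c_\delta(s)\ge\bmin\gamma_0/2$ --- valid for $\delta\le 1$ since $\beps\ge\bmin$, $1-\mu_{0,w}(s)\ge\gamma_0$ by Lemma~\ref{lem.moy.zt} and $\mu_{0,w}(s)+\delta<2$ --- and then integrating the exponential gives $2\,\ztlip\,\nrm{g}{L^\infty(0,T)}/(\gamma_0\bmin)$. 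Taking the smaller of the two proves the claimed estimate for every $\delta\in(0,1]$; and since $\trhoed$ is pointwise non-increasing in $\delta$, one has $n_\delta\le n_1$ for $\delta\ge1$, so the bound holds for all $\delta>0$ with the same $\bar\zeta$.

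Finally, to let $\delta\downarrow0$: for each fixed $t$ one has $\mu_{0,w}(t)>0$ --- positive at $t=0$ by Assumption~\ref{hypo.data.deux} and, by the characteristic representation \eqref{rho_model_by_characteristics}, positive for $t>0$ since $\rhow(t,a)>0$ for $a<t/\e$ --- hence $\trhoed(t,\cdot)\uparrow\trhow(t,\cdot)$ as $\delta\downarrow0$, so by monotone convergence $n_\delta(t)\uparrow\int_{\rr}\zeta(w)\trhow\,d\tia$; since the bound does not depend on $\delta$, it is inherited by the limit, which gives the statement for $\trhow$. The main obstacle is the first step --- turning the formal computation on $\zeta(w)\,\trhoed$ into a rigorous argument based on the weak formulation of \eqref{eq.trho.eps} (including the vanishing of the age-flux as $a\to+\infty$), which has to be carried out as in \cite{MiOel.1}; once the inequality $\e\dt n_\delta+c_\delta n_\delta\le\zeta(0)c_\delta+\ztlip|g|$ is available, everything downstream of it is elementary.
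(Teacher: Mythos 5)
Your proposal is correct and follows essentially the same route as the paper's proof: the same transport equation for the product $\zeta(w)\,\trhoed$, integration in age with the boundary flux contributing $\zeta(0)\,\trhoed(t,0)$, Jensen/Cauchy--Schwarz to absorb the quadratic terms, and Gronwall using the lower bound on $\trhoed(t,0)$ inherited from Lemma~\ref{lem.moy.zt}. Your treatment is in fact slightly more careful than the paper's on the minor points (making both branches of the minimum explicit, covering all $\delta>0$ by monotonicity rather than only $\delta<\gamma_0/2$, and justifying the limit $\delta\downarrow0$ by monotone convergence), but these are refinements of the same argument, not a different one.
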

\begin{proof}
The product $p(t,a):=\zteps \trhoed$ satisfies
$$
\e \dt p + \da p  + \left( \zteps^2  - \zteps \int_{\rr} \zteps \trhoed \right) \trhoed +  \trhoed(t,0)  p =
\zeta'(w) g(t) \trhoed \;.
$$
Indeed, using arguments as in Lemma 2.1 p. 489 and Lemma 3.1 p. 493 \cite{MiOel.1},  
one proves that if $w$ solves \eqref{eq.w.eps} and $\zeta$ is Lipschitz, then
$\zeta(w)$ solves $(\e \dt + \da) \zeta(w) = \zeta'(w)g$ in the  sense
of characteristics (as in Theorem \ref{prop.rho.exist}) with the corresponding boundary conditions. 
Then  the latter equation on $p$ is understood in the same manner.  

Integrating in age and setting $q(t):=\int_{\rr} p(t,a) da$ we conclude that
\begin{equation}\label{ineq.q}
 \e \dt q  - \zteps(t,0) \trhoed(t,0)   +  \int_{\rr}  \zteps^2 \trhoed  da -  \left( \int_{\rr} \zteps \trhoed \right)^2 
 + q \trhoed(t,0) \leq \ztlip \nrm{g}{\infty} \, .
\end{equation}
To find a lower bound for $\trhoed(0,t)$ we choose $\delta < \gamma_0/2$ and use the upper bound on $\mu_{0,w}(t)$ established in Lemma \ref{lem.moy.zt} in order to obtain 
\begin{equation}
\label{inequ_rhotildedelta}
\trhoed(0,t) \geq \bmin \left( \frac{1}{ 1 - \gamma_0 + \delta } - 1  \right) \geq \bmin \frac{\gamma_0}{2}  \;.
\end{equation}
Assuming  $\mu_{0,w}(t) > 0$ we also find using  Jensen's inequality that
$$
 \left( \int_{\rr} \zeta(w(t,a)) \trhoed(t,a) da \right)^2 
 \leq \int_{\rr} (\zteps(w(t,a)))^2 \trhoed da \frac{\mu_{0,w}}{(\mu_{0,w}+\delta)} \leq \int_{\rr} (\zteps(w(t,a))) ^2 \trhoed da \;.
$$
If $\mu_{0,w}(t)=0$ the same inequality holds true since then $\rhow(t,a)=0$ for almost every $a$. These considerations 
allow then to rewrite \eqref{ineq.q} as 
$$
\e \dt q  + \trhoed(0,t) (q-  \zeta(0)) \leq  \ztlip \nrm{g}{\infty}\;.
$$
Setting $\ti{q}:= q - \zeta(0)$ and using Gronwall's Lemma gives  
$$
\ti{q}(t) \leq \exp\left( - \ue \int_0^t \trhoed(0,s) ds \right) \ti{q}(0) + \frac{\ztlip \nrm{g}{\infty}}{\e} \int_0^t \exp\left( - \ue \int_\tau^t \trhoed(0,s) ds \right) d \tau \; .
$$
Thanks to the uniform lower bound \eqref{inequ_rhotildedelta} we conclude
$$
\ti{q}(t) \leq \exp\left( - \frac{ \bmin \gamma_0 t }{2 \e}\right)  \ti{q}(0) + \frac{2 \ztlip \nrm{g}{\infty}}{\gamma_0 \bmin} \left( 1 - \exp\left( - \frac{\bmin \gamma_0 t}{2 \e} \right) \right)\;,
$$
which then gives turning to the variable $q$ that
\begin{equation}\label{eq.lambda.t}
 q(t) 
\leq \zeta(0)+ \int_{\rr} \zeta(\vepsi(a)) \ti{\rho}_{\e,I}  (a) da + \frac{2 \ztlip \nrm{g}{\infty}}{\gamma_0 \bmin} \left( 1 - \exp\left( - \frac{\bmin \gamma_0 t}{2 \e} \right) \right) \; . 
\end{equation}
This bound is uniform in $\delta$. One  passes to the limit $\delta=0$ which gives the final result.
\end{proof}
%
%

\begin{proposition}\label{prop.borne.inf.muze}
 Under the same assumptions as above and  if $\mu_{0,w}(t) < 1 - \gamma_0$ and choosing $\mumin$ s.t.
 $$
 \mumin < \min \left( \mu_{0,w}(0) , \frac{ \bmin }{\bmin + \bar \zeta + \ztlip  \, \nrm{g}{L^\infty(0,T)} \, \min\left(\frac{2}{\gamma_0 \bmin}, \frac{T}{\e}\right) }\right) \; ,
 $$
where we used the bound provided by Proposition \ref{prop.moy.zt},  one has a lower bound on $\mu_{0,w}$~:
  $$
 \mu_{0,w}(t) \geq \mumin \; , \quad \forall t \geq 0 \; .
 $$
\end{proposition}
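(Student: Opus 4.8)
The plan is to mimic the $p=0$ case of Lemma~\ref{lem.higher.moments_bounds}: write the ODE satisfied by $\mu_{0,w}$, bound the loss term $\int_{\rr}\zeta(w)\rhow_w\,da$ from above by the quantity controlled in Proposition~\ref{prop.moy.zt}, and then run a contradiction-via-Gronwall argument exactly as in Lemma~\ref{lem.moy.zt}. Concretely, integrating \eqref{eq.rho.w} in age gives
$$
\e\,\dt\mu_{0,w} - \beps(1-\mu_{0,w}) + \int_{\rr}\zeta(w(t,a))\,\rhow_w(t,a)\,da = 0 \; .
$$
Since $\rhow_w = (\mu_{0,w}+\delta)\trhoed$ at $\delta=0$, i.e. $\rhow_w = \mu_{0,w}\trhow$, the loss term equals $\mu_{0,w}\int_{\rr}\zeta(w)\trhow\,da \le \Lambda\,\mu_{0,w}$, where
$$
\Lambda := \bar\zeta + \ztlip\,\nrm{g}{L^\infty(0,T)}\,\min\!\left(\tfrac{2}{\gamma_0\bmin},\tfrac{T}{\e}\right)
$$
is the (finite, time-uniform) bound furnished by Proposition~\ref{prop.moy.zt}. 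Together with $\beps\ge\bmin$ and $1-\mu_{0,w}>0$ (which holds by Lemma~\ref{lem.moy.zt}, even $1-\mu_{0,w}\ge\gamma_0$), this yields the differential inequality
$$
\e\,\dt\mu_{0,w} \ge \bmin(1-\mu_{0,w}) - \Lambda\,\mu_{0,w} = \bmin - (\bmin+\Lambda)\,\mu_{0,w}\; .
$$

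The conclusion then follows by the standard contradiction argument. By hypothesis $\mu_{0,w}(0) > \mumin$ and, because $\mumin < \bmin/(\bmin+\Lambda)$, the right-hand side $\bmin-(\bmin+\Lambda)\mumin$ is strictly positive; so $\mu_{0,w}(t) > \mumin$ holds on a maximal interval $[0,t_0)$. If $t_0<\infty$ then $\mu_{0,w}(t_0)=\mumin$ by continuity (continuity of $\mu_{0,w}$ follows from $\rhow_w\in C(0,T;L^1(\rr))$, Theorem~\ref{thm.sys.mod} / Theorem~\ref{prop.rho.exist}). Set $r(t):=\mu_{0,w}(t)-\mumin$; then $r(0)>0$, $r(t_0)=0$, and on $[0,t_0]$,
$$
\e\,\dt r \ge \bmin - (\bmin+\Lambda)\mumin - (\bmin+\Lambda)r =: \kappa - (\bmin+\Lambda)r\; ,\qquad \kappa>0\; .
$$
Gronwall's lemma applied to this inequality gives
$$
r(t_0) \ge e^{-(\bmin+\Lambda)t_0/\e}\,r(0) + \frac{\kappa}{\bmin+\Lambda}\left(1-e^{-(\bmin+\Lambda)t_0/\e}\right) > 0\; ,
$$
contradicting $r(t_0)=0$. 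Hence $t_0=\infty$ and $\mu_{0,w}(t)\ge\mumin$ for all $t\ge 0$.

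The only genuinely delicate point is the control of the loss term: one must be sure that the bound of Proposition~\ref{prop.moy.zt} on $\int_{\rr}\zeta(w)\trhow\,da$ is indeed time-uniform (it is — the factor $1-\exp(-\bmin\gamma_0 t/(2\e))$ is bounded by $1$) and that the factorisation $\rhow_w=\mu_{0,w}\trhow$ is legitimate up to $\delta=0$, which is exactly the content of the final sentence of Proposition~\ref{prop.moy.zt}. Everything else — the sign of $\kappa$, the applicability of Lemma~\ref{lem.moy.zt} to get $1-\mu_{0,w}\ge\gamma_0$, and the rigorous (as opposed to formal) justification of the differential inequality — is handled verbatim as in Lemma~\ref{lem.moy.zt} and Lemma~\ref{lem.higher.moments_bounds}, so the computations can be kept formal with a pointer to those proofs.
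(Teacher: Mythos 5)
Your proposal is correct and follows essentially the same route as the paper: integrate the age-structured equation to get the ODE for $\mu_{0,w}$, rewrite the loss term as $\mu_{0,w}\int_{\rr}\zeta(w)\trhow\,da$ and bound it by the time-uniform constant of Proposition~\ref{prop.moy.zt}, then run the Gronwall-based contradiction at the first time $\mu_{0,w}$ would touch $\mumin$. The only cosmetic difference is that you reduce to a constant-coefficient differential inequality before applying Gronwall, whereas the paper keeps the variable coefficient $\beps+\int_{\rr}\zeta\trhow\,da$ in the exponential; both yield the same strict positivity of $\mu_{0,w}(t_0)-\mumin$.
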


\begin{proof}
 We integrate the equation \eqref{eq.rho.eps} with respect to age which gives
 $$
 \left\{
 \begin{aligned}
&  \e \dt \mu_{0,w} - \beps (1- \mu_{0,w}) + \int_{\rr} \rhow_w(t,a) \zeta(w(t,a)) da = 0 \;, & t>0 \; , \\
& \mu_{0,w}(0)=\int_{\rr} \rhoi(a)da\; ,& t=0 \;.
\end{aligned}
\right.
 $$
 we divide and we multiply the last term on the left hand side by $\mu_{0,w}$ and we write:
 $$
 \e \dt \mu_{0,w} + \left( \beps (t)+ \int_{\rr} \zteps \trhow da \right)\mu_{0,w} = \beps(t)\;,\quad  t>0\;.
 $$
 Now suppose that there exists a time small enough s.t. $\mu_{0,w}(t) > \mumin$ for all $t \in [0,t_0)$ and that $\mu_{0,w}(t_0)=\mumin$. 
We use the notation $\olambda:=\bar \zeta + \ztlip  \, \nrm{g}{L^\infty(0,T)} \, \min\left(\frac{2}{\gamma_0 \bmin}, \frac{T}{\e}\right)$ and write for the difference $\tmuze(t):=\mu_{0,w}(t)-\mumin$
\begin{multline*}
 \e \dt  \tmuze 
+  \left( \beps (t)+ \int_{\rr} \zteps \trhow da \right)  \tmuze  = \beps(t)(1  - \mumin ) -  \left( \int_{\rr} \zteps \trhow da\right) \mumin \geq  \\
\geq  \bmin (1-\mumin) - \olambda \mumin 
=\left(\bmin+ \olambda\right) \left( \frac{\bmin}{\bmin+ \olambda} -\mumin \right) > 0
\;,
\end{multline*}
which holds thanks to the bound on $\int_{\rr} \zteps \trhow \, da$ established in Proposition \ref{prop.moy.zt} and the definition of $\mumin$.
Using   Gronwall's Lemma we finally obtain that
$$
\mu_{0,w}(t_0)-\mumin > \exp\left( - \ue \int_0^{t_0} ( \beps(\tau) + \olambda) d\tau \right) (\mu_{0,w}(0)-\mumin) > 0 \;,
$$
which contradicts the fact  that $\mu_{0,w}(t_0)=\mumin$. This ends the proof.
\end{proof}

\section{Local existence of the fully coupled problem} \label{sec.ex.local}

\begin{theorem}
Let $f$ be a Lipschitz function on $(0,T)$ and $\vepsi \in L^\infty(\rr,\omega)$.
We suppose that Assumptions \ref{ass.zeta}, \ref{hypo.data.deux} and \ref{ass.beta} hold.
 Let $(\rhow_w,w)$ be the solution of \eqref{eq.rho.mod}-\eqref{eq.w.mod} together with  $\titig_w$, the simple cut-off
 defined by \eqref{eq.cut.off.simple}. Then for any fixed $\umu<\mu_{0,w}(0)$ there exists a time
 $$
 T= \frac{\e}{\gamma_3} \left( \bmin \umu - (\bmin+\bar \zeta)\umu^2 \right)
 $$
 for which $\mu_{0,w}(t)>\umu$ for any $t\in(0,T)$. So the solution $(\rhow,w)$ of \eqref{eq.rho.mod}-\eqref{eq.w.mod}-\eqref{eq.cut.off.simple} is also the unique local solution of 
 the fully coupled system \eqref{eq.rho.eps}-\eqref{eq.veps}.
\end{theorem}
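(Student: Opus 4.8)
The plan is to run a first-exit-time (continuation) argument on the zeroth moment $\mu_{0,w}(t)=\int_{\rr}\rhow_w(t,a)\,da$, exploiting that as long as $\mu_{0,w}$ stays strictly above $\umu$ the truncation $\max(\mu_{0,w},\umu)$ in $\titig_w$ is inactive, so that $(\rhow_w,w)$ actually solves the genuine system \eqref{eq.rho.eps}-\eqref{eq.veps}; one then only has to measure how long $\mu_{0,w}$ stays above $\umu$. First I would invoke Theorem \ref{thm.sys.mod} on $(0,T)$ together with Theorem \ref{thm.simple.cut.off} (taking $\ovp>\gamma_2/\umu$) to produce the unique pair $(\rhow_w,w)\in C(0,T;L^1(\rr))\times X_T$ solving \eqref{eq.rho.mod}-\eqref{eq.w.mod}-\eqref{eq.cut.off.simple}, for which all the a priori estimates of Section \ref{sec.global.ex} hold. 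Combining Corollary \ref{coro.stab.rho.u.mod} with Proposition \ref{prop.borne.zt.veps.rhoe} gives $\int_{\rr}\zeta(w)|w|\rhow_w\,da\le\gamma_2/\umu$ and hence a \emph{time-uniform} bound $|\titig_w(t)|\le\frac1{\umu}\bigl(\e\|\dt f\|_{L^\infty(0,T)}+\gamma_2/\umu\bigr)=:G_\umu$, so that $\titig_w$ qualifies as an admissible source $g$ in the sense of Section \ref{sec.intermediate} with $\|g\|_{L^\infty}\le G_\umu$.

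Next I would write the equation obtained by integrating \eqref{eq.rho.mod} in age (as in Proposition \ref{prop.borne.inf.muze}), namely $\e\dt\mu_{0,w}+(\beps+\lambda(t))\mu_{0,w}=\beps$ with $\lambda(t):=\int_{\rr}\zeta(w)\trhow\,da$. Since $\mu_{0,w}(0)<1$, Lemma \ref{lem.moy.zt} supplies some $\gamma_0$ with $\mu_{0,w}(t)<1-\gamma_0$ for all $t$, which makes Proposition \ref{prop.moy.zt} applicable with $g=\titig_w$; using $1-e^{-x}\le x$ in \eqref{eq.lambda.t} this yields $\lambda(t)\le\bar\zeta+\ztlip\,G_\umu\,t/\e$. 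Then I would set $t_0:=\inf\{t>0:\mu_{0,w}(t)=\umu\}$, which is positive (possibly $+\infty$) because $\mu_{0,w}$ is continuous and $\mu_{0,w}(0)>\umu$, and argue by contradiction that $t_0\ge T$. On $[0,t_0]$ one has $\mu_{0,w}\ge\umu$, and $q:=\mu_{0,w}-\umu\ge0$ solves $\e\dt q+(\beps+\lambda)q=\beps(1-\umu)-\lambda\,\umu\ge\frac1{\umu}\bigl(\bmin\umu-(\bmin+\bar\zeta)\umu^2-\ztlip G_\umu\umu^2\,t/\e\bigr)$; with $\gamma_3:=\ztlip G_\umu\umu^2$ (a genuine constant once the data and $\umu$ are frozen, thanks to the time-uniformity of $G_\umu$) and $T$ as in the statement, this right-hand side is strictly positive on $[0,t_0]$ as soon as $t_0<T$. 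Gronwall's lemma then forces $q(t_0)\ge\exp\bigl(-\tfrac1\e\int_0^{t_0}(\beps+\lambda)\bigr)q(0)>0$, contradicting $q(t_0)=\mu_{0,w}(t_0)-\umu=0$. Hence $\mu_{0,w}(t)>\umu$ for every $t\in(0,T)$. (If $\bmin\umu-(\bmin+\bar\zeta)\umu^2\le0$ then $T\le0$, $(0,T)=\emptyset$, and there is nothing to prove.)

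Finally, on $(0,T)$ we have $\max(\mu_{0,w},\umu)=\mu_{0,w}$, so $\titig_w$ reduces there to $g_w=\frac1{\mu_{0,w}}\bigl(\e\dt f+\int_{\rr}\zeta(w)w\rhow_w\,da\bigr)$ and $(\rhow_w,w)$ solves \eqref{eq.rho.eps}-\eqref{eq.veps} --- equivalently, via $\veps=(\zeps(t)-\zeps(t-\e a))/\e$, the integral equation \eqref{eq.z.eps} --- on $(0,T)$. For uniqueness, any solution of the original system has a continuous zeroth moment starting at $\mu_{0,w}(0)>\umu$, hence staying $>\umu$ on a maximal subinterval; on that subinterval it also solves the simple-cut-off problem \eqref{eq.rho.mod}-\eqref{eq.w.mod}-\eqref{eq.cut.off.simple} and thus coincides with $(\rhow_w,w)$ by the uniqueness in Theorem \ref{thm.sys.mod}, so $(\rhow_w,w)|_{(0,T)}$ is the unique local solution.

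I expect the only genuinely delicate point to be the bookkeeping in the contradiction step: keeping track of the exact constant multiplying $t$ in the lower bound for the source of the $q$-equation and checking that it packages into the stated $T$ --- which works precisely because the bound $G_\umu$ on $\titig_w$ is uniform in time, not merely on the a priori unknown positivity interval of $\mu_{0,w}$. The remainder is a routine continuation argument built on the estimates of Sections \ref{sec.global.ex}--\ref{sec.intermediate}.
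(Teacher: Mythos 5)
Your proof is correct and follows essentially the same route as the paper: a uniform bound $|\titig_w|\le\frac{1}{\umu}(\e\|\dt f\|_{L^\infty}+\gamma_2/\umu)$ from Proposition \ref{prop.borne.zt.veps.rhoe}, the bound on the averaged off-rate from Proposition \ref{prop.moy.zt} (taking the $T/\e$ branch of the min), and a Gronwall/continuation argument on $\mu_{0,w}-\umu$ that yields exactly the stated $T$. The only differences are presentational --- you re-derive the content of Proposition \ref{prop.borne.inf.muze} inline instead of citing it, and you spell out the identification/uniqueness step that the paper leaves implicit.
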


\begin{proof}
 Gathering results above one has :
$$
\nrm{\titig_w}{L^\infty(0,T)} \leq \frac{ 1}{\umu} \left( \e | \dt f | + p(t) \right) \leq \frac{ 1}{\umu} \left( \e \nrm{ \dt f}{L^\infty(0,T)}  + \frac{\gamma_2}{\umu} \right) \leq \frac{\gamma_3}{\umu^2},
$$ 
since we suppose that $\umu <1$ and we set $\gamma_3 := \ztlip( \e \nrm{ \dt f}{L^\infty(0,T)}  + {\gamma_2})$.
Thanks to Proposition \ref{prop.borne.inf.muze},  the lower bound on $\mu_{0,w}$ then becomes :
$$
\mu_{0,w}(t) > \min \left( \mu_{0,w}(0) , \frac{\bmin \umu^2}{(\bmin+\bar \zeta)\umu^2 + \frac{\gamma_3 T}{\e}} \right).
$$
Choosing $\umu<\mu_{0,w}(0)$ we tune $T$ s.t.
$$
 \frac{\bmin \umu^2}{(\bmin+\bar \zeta)\umu^2 + \frac{\gamma_3 T}{\e}}  > \umu
 $$
\end{proof}

\section{Global existence for specific data}\label{sec.spec.data}
Under hypotheses of Theorem \ref{thm.sys.mod},
whatever be the  time of existence $T$ for $(\rhow_w,w)$,  
the solutions of the stabilized model, then 
thanks to Corollary \ref{coro.stab.rho.u.mod} one has that : 
$$
\begin{aligned}
 \int_{\rr}  \zeta(w(t,a)) \rhow(t,a) da&  \leq \int_{\rr} (\zeta(0) + \ztlip | w | ) \rhow_w da \\
 & \leq \zeta(0) + \ztlip \left( \int_{\rr} |\vepsi| \rhoi da + \int_0^T | \dt f | ds \right) =: \breve{\zeta}, \quad \forall t \in (0,T).
\end{aligned}
$$
\begin{proposition}\label{prop.mu.pos}
Under assumptions \ref{ass.zeta},  \ref{hypo.data.deux} and \ref{ass.beta},  
 if $\bmin > \breve{\zeta}$ and we set : 
 $$
 0< \mumin < \min \left( 1-\frac{\breve{\zeta}}{\bmin}, \mu_{0,w}(0) \right)
 $$
 then one has
 $$
 \mu_{0,w}(t) \geq \mumin ,\quad \forall t \in (0,T). 
 $$
\end{proposition}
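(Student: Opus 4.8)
The plan is to reproduce the continuity-and-contradiction argument of Proposition~\ref{prop.borne.inf.muze}, but feeding in the cruder bound $\breve{\zeta}$ on $\int_{\rr}\zeta(w)\rhow\,da$ in place of the bound on $\int_{\rr}\zeta(w)\trhow\,da$ coming from Proposition~\ref{prop.moy.zt}. The point is that $\breve{\zeta}$ is a genuine constant, uniform in $T$, so the resulting lower bound holds for every time of existence.

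First I would integrate \eqref{eq.rho.mod} in age --- formally, with the usual justification as in Lemma~2.2 of \cite{MiOel.1} --- to obtain
$$
\e\dt\mu_{0,w} - \beps\,(1-\mu_{0,w}) + \int_{\rr}\zeta(w)\,\rhow\,da = 0 \,, \qquad \mu_{0,w}(0)=\int_{\rr}\rhoi\,da \,;
$$
the cut-off in the source term of \eqref{eq.w.mod} plays no role here, since \eqref{eq.rho.mod} is just the renewal equation with off-rate $\zeta(w)$. Then I would argue by contradiction: since $\mu_{0,w}(0)>\mumin$, continuity produces a first time $t_0$ with $\mu_{0,w}(t_0)=\mumin$ and $\mu_{0,w}(t)>\mumin$ on $[0,t_0)$. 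Writing $\tmuze:=\mu_{0,w}-\mumin$, the identity above becomes
$$
\e\dt\tmuze + \beps\,\tmuze = \beps\,(1-\mumin) - \int_{\rr}\zeta(w)\,\rhow\,da \,.
$$

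On $(0,T)$ one has $\int_{\rr}\zeta(w)\rhow\,da\leq\breve{\zeta}$, which is exactly the computation displayed just above the statement (it uses $\zeta(u)\leq\zeta(0)+\ztlip|u|$ together with the a~priori estimate \eqref{equ.stab.rho.u} of Corollary~\ref{coro.stab.rho.u.mod}). Combining this with $\beps\geq\bmin$ and $1-\mumin>0$, the right-hand side above is bounded below by $(1-\mumin)\bmin-\breve{\zeta}=:c$, and the hypothesis $\mumin<1-\breve{\zeta}/\bmin$ is precisely what makes $c>0$ (while $\bmin>\breve{\zeta}$ is what makes the admissible interval for $\mumin$ nonempty). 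Multiplying by the integrating factor $\exp\!\big(\tfrac1\e\int_0^t\beps\,ds\big)$ then shows that $t\mapsto\tmuze(t)\exp\!\big(\tfrac1\e\int_0^t\beps\,ds\big)$ is strictly increasing on $[0,t_0]$, whence
$$
\tmuze(t_0)\,\exp\!\Big(\tfrac1\e\int_0^{t_0}\beps\,ds\Big) \;>\; \tmuze(0) \;=\; \mu_{0,w}(0)-\mumin \;>\; 0 \,,
$$
so $\tmuze(t_0)>0$, contradicting $\tmuze(t_0)=0$; hence $\mu_{0,w}(t)\geq\mumin$ on $(0,T)$.

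I do not expect any serious obstacle. The one point needing care is that $\breve{\zeta}$ bounds $\int_{\rr}\zeta(w)\rhow\,da$ on the whole of $(0,T)$ and is independent of $T$ --- this is why the conclusion is stated ``whatever be the time of existence $T$'' --- but that bound is already in hand from Corollary~\ref{coro.stab.rho.u.mod}; everything else is a one-line scalar ODE comparison, identical in form to the end of the proof of Proposition~\ref{prop.borne.inf.muze}.
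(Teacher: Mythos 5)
Your proposal is correct and follows essentially the same route as the paper: integrate the renewal equation in age, bound $\int_{\rr}\zeta(w)\rhow\,da$ by $\breve{\zeta}$, and run a Gronwall-type comparison on $\mu_{0,w}-\mumin$ using $\bmin(1-\mumin)-\breve{\zeta}>0$. The only difference is cosmetic: the paper applies Gronwall's lemma directly to conclude $\mu_{0,w}(t)-\mumin\geq \exp(-\bmax t/\e)(\mu_{0,w}(0)-\mumin)>0$, whereas you package the same estimate as a contradiction at the first crossing time, in the style of Proposition~\ref{prop.borne.inf.muze}.
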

\begin{proof}
We set $\hatmu := \mu_{0,w}(t) - \mumin$ and write the equation that it satisfies~:
$$
\e \dt \hatmu + \beta \hatmu  = -  \int_{\rr} \zeta \rhow da+ \beta(1  - \mumin )  \geq - \breve{\zeta} + \bmin( 1 - \mumin ).
$$
We estimate from below the right hand side using previous bounds.
The lower bound is positive definite provided that $\bmin > \breve{\zeta}$ and that $\mumin < 1 - \breve{\zeta}/\bmin$. Using Gronwall's Lemma, one has :
$$
\hatmu (t) \geq \exp( - \bmax t / \e) \hatmu(0) > 0
$$
if $\mumin < \mu_{0,w}(0)$, which ends the proof.
\end{proof}

\begin{theorem}\label{thm.ex.global}
If we fix a finite time $T>0$.
Under Assumptions \ref{ass.zeta} and  \ref{hypo.data.deux},  and assuming that 
\begin{enumerate}[i)]
 \item $f$ is Lipschitz on $(0,T)$,
 \item $\beps$ satisfies assumptions \ref{ass.beta} together with
 $\bmin > \breve{\zeta}$
\end{enumerate}
there exists a unique solution 
 $(\rhoe,\veps)\in C(0,T;L^1(\rr))\times X_T$ solving system \eqref{eq.rho.eps}-\eqref{eq.veps}. 
\end{theorem}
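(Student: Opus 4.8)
The plan is to combine the global existence theory for the doubly-cut-off problem (Theorem \ref{thm.sys.mod}) with the two consecutive removals of the artificial cut-offs, the cut-off on $p$ first and then the cut-off on $\mu_{0,w}$. Concretely, fix $T>0$; since $\bmin>\breve\zeta$, Proposition \ref{prop.mu.pos} furnishes a constant $\mumin$ with $0<\mumin<\min(1-\breve\zeta/\bmin,\mu_{0,w}(0))$ depending only on the data through $\breve\zeta$, $\bmin$ and $\rhoi$. We choose the lower cut-off constant $\umu:=\mumin$ and the upper cut-off constant $\ovp:=2\gamma_2/\umu$ (any $\ovp>\gamma_2/\umu$ works), where $\gamma_2$ is the constant of Proposition \ref{prop.borne.zt.veps.rhoe} for this value of $\umu$. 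By Theorem \ref{thm.sys.mod} there is a unique pair $(\rhow_w,w)\in C(0,T;L^1(\rr))\times X_T$ solving the stabilized system \eqref{eq.rho.mod}-\eqref{eq.w.mod}-\eqref{eq.g.mod} on $[0,T]$.

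Next I remove the upper cut-off. By Corollary \ref{coro.stab.rho.u.mod} the stabilized solution obeys the a priori estimate \eqref{equ.stab.rho.u}, hence Proposition \ref{prop.borne.zt.veps.rhoe} applies and gives $|p(t)|=\big|\int_{\rr}\zeta(w)w\rhow_w\,da\big|\leq\gamma_2/\umu<\ovp$ for all $t\in[0,T]$. Therefore the truncation $\max(-\ovp,\min(\ovp,\cdot))$ in \eqref{eq.g.mod} is never active, so $\tigw=\titig_w$ along this solution and, by Theorem \ref{thm.simple.cut.off}, $(\rhow_w,w)$ is also the unique solution of \eqref{eq.rho.mod}-\eqref{eq.w.mod} with the simple cut-off right-hand side \eqref{eq.cut.off.simple}.

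Now I remove the lower cut-off. Along this simple-cut-off solution, Corollary \ref{coro.stab.rho.u.mod} still yields \eqref{equ.stab.rho.u}, which is exactly the estimate used to define $\breve\zeta$; thus $\int_{\rr}\zeta(w)\rhow_w\,da\leq\breve\zeta$ on $[0,T]$. Since $\bmin>\breve\zeta$ and $\umu=\mumin$ was chosen below $\min(1-\breve\zeta/\bmin,\mu_{0,w}(0))$, Proposition \ref{prop.mu.pos} gives $\mu_{0,w}(t)\geq\mumin=\umu$ for every $t\in(0,T)$. Consequently $\max(\mu_{0,w}(t),\umu)=\mu_{0,w}(t)$, so $\titig_w$ coincides with the genuine right-hand side $g_w=\frac{1}{\mu_{0,w}}(\e\dt f+\int_{\rr}\zeta(w)w\rhow_w\,da)$, and the pair $(\rhow_w,w)$ solves \eqref{eq.rho.eps}-\eqref{eq.veps} on $[0,T]$ with $\veps=w$. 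Uniqueness transfers in the opposite direction: any solution of \eqref{eq.rho.eps}-\eqref{eq.veps} on $[0,T]$ satisfies \eqref{equ.stab.rho.u}, hence the bound $\int_{\rr}\zeta w\rhow\,da\leq\breve\zeta$, hence $\mu_{0,w}\geq\mumin$ by Proposition \ref{prop.mu.pos} and $|p|\leq\gamma_2/\umu<\ovp$ by Proposition \ref{prop.borne.zt.veps.rhoe}, so it is in fact a solution of the doubly-stabilized system, which has a unique solution by Theorem \ref{thm.sys.mod}. Finally, since $T>0$ was arbitrary and $\breve\zeta$, $\mumin$, $\gamma_2$, $\umu$, $\ovp$ do not deteriorate as $T$ grows (they depend only on $\nrm{\dt f}{L^\infty(\rr)}$, $\rhoi$ and the structural constants), the construction is consistent across all $T$ and yields the solution on $[0,T]$ for the prescribed finite $T$.

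The step I expect to require the most care is the bookkeeping of the constants: one must check that the choice $\umu=\mumin$ made to satisfy Proposition \ref{prop.mu.pos} is \emph{simultaneously} compatible with the hypothesis $\ovp>\gamma_2/\umu$ of Theorem \ref{thm.simple.cut.off}, i.e. that fixing $\umu$ first and only then choosing $\ovp$ large does not create a circular dependence — it does not, because $\gamma_2$ in Proposition \ref{prop.borne.zt.veps.rhoe} depends on $\umu$ only through the already-fixed a priori bound on $\int_{\rr}\rhow_w|w|\,da$, which is the $T$-independent quantity $\int_{\rr}|\vepsi|\rhoi\,da+\nrm{\dt f}{L^1(0,T)}$ and in no way depends on the lower cut-off. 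Once this ordering is made explicit the remaining implications are immediate from the quoted results.
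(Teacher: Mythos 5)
Your proposal is correct and follows essentially the same route as the paper: construct the globally-defined solution of the cut-off problem, use Proposition \ref{prop.borne.zt.veps.rhoe} (via Theorem \ref{thm.simple.cut.off}) to deactivate the upper truncation, and use Proposition \ref{prop.mu.pos} under $\bmin>\breve\zeta$ to show $\mu_{0,w}$ never drops to $\umu$, so the stabilized solution solves the original system and uniqueness transfers back. The only cosmetic difference is that the paper takes $\umu$ strictly below $\mumin$ while you take $\umu=\mumin$; both choices keep the lower cut-off inactive, and your explicit remarks on the non-circularity of the constants and on the converse uniqueness step merely spell out what the paper leaves implicit.
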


\begin{proof}
 By Theorem \ref{thm.simple.cut.off}, there exists a unique couple $(\rhow_w,w)\in C(0,\infty;L^1(\rr))\times X_\infty$ solving \eqref{eq.rho.mod}-\eqref{eq.w.mod}-\eqref{eq.cut.off.simple} for any given constant $\umu$.
 We choose $T>0$ and provided that $\beta$ satisfies hypothesis required by Proposition \ref{prop.mu.pos}
  we set the constants $0<\umu <\mumin$ according to  Propositions \ref{prop.mu.pos}. Then    $\mu_{0,w}$ does not reach the threshold value $\umu$  so that 
  $$
\begin{aligned}
\titig_w(t) = & \frac{1}{\max(\mu_{0,w}(t),\umu)} \left( \e \dt f + \int_{\rr} ( \zeta(w) \rhow_w w )(t,a)da \right)
= \\
&  \frac{1}{\mu_{0,w}(t)} \left( \e \dt f +  \int_{\rr} ( \zeta(w) \rhow_w w )(t,a)da \right) = g_w(t), \quad a.e. \; t \in (0,T).
\end{aligned}
$$
The pair $(\rhow_w,w)$ is in fact also solving \eqref{eq.rho.eps}-\eqref{eq.veps} on this time interval.
This provides existence of a solution $(\rhoe,\veps)=(\rhow_w,w)$ on $[0,T]$.
Since  by Theorem \ref{thm.simple.cut.off} $(\rhow_w,w)$  is unique,  so is $(\rhoe,\veps)$ in this time period.
\end{proof}

\section{Blow up  for positive solutions}\label{sec.blow.up}

\begin{theorem}\label{thm.positivity.veps}
Under assumptions \ref{hypo.data.deux} and
 if $T_0$ is the  time of existence of $(\rhoe,\veps)$ solving \eqref{eq.rho.eps}-\eqref{eq.veps}, and if
 \begin{enumerate}[i)]
 \item $\vepsi(a)\geq0$ for a.e. $a\in \rr$,
 \item $\dt f(t) >0$ for a.e. $t\in(0,T_0)$, 
\end{enumerate}
then the product $\rhoe(t,a) \veps(t,a)$ 
is non-negative for a.e. $(t,a) \in (0,T_0)\times\rr$.
\end{theorem}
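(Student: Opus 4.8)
The plan is to prove the sharper statement that the negative part of $\veps$ carries no $\rhoe$–mass, from which $\rhoe\veps=\rhoe\veps^+\geq0$ follows at once. All of what follows takes place on $(0,T_0)$, where $\muze(t)>0$ by the very definition of the time of existence; recall in addition that $\rhoe\geq0$ (the lemma after Theorem~\ref{prop.rho.exist}), that $\zeta(\veps)\geq\ztmin>0$ by Assumption~\ref{ass.zeta}, and that along the original system \eqref{eq.rho.eps}--\eqref{eq.veps} the forcing of the transport equation for $\veps$ is $g_\veps(t)=\frac{1}{\muze(t)}\left(\e\,\dt f(t)+\int_{\rr}\zeta(\veps)\,\veps\,\rhoe\,da\right)$. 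I set $\veps^-:=\max(-\veps,0)$. Since $\zeta$ is Lipschitz and $\veps$ solves \eqref{eq.veps} in the sense of characteristics, a regularisation of $s\mapsto\max(s,0)$, carried out exactly as for the {\em a priori} estimate of Corollary~\ref{coro.stab.rho.u.mod} (Lemma~5.1 of \cite{MiOel.2}, Lemmas~2.1 and~3.1 of \cite{MiOel.1}), shows that $\veps^-$ solves, in the same weak/characteristic sense,
$$
\e\,\dt\veps^-+\da\veps^-=-\,\chiu{\{\veps<0\}}\,g_\veps(t)\,,\qquad\veps^-(t,0)=0\,,\qquad\veps^-(0,a)=\vepsi^-(a)=0\,,
$$
the last identity because $\vepsi\geq0$ by hypothesis~(i).

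The heart of the proof is a dissipation identity for $\phi(t):=\int_{\rr}\rhoe(t,a)\,\veps^-(t,a)\,da$. I would multiply the equation for $\veps^-$ by $\rhoe\geq0$, integrate in age, and use $\e\dt\rhoe+\da\rhoe=-\zeta(\veps)\rhoe$ to recombine the resulting cross terms, obtaining
$$
\e\,\dt\phi(t)+\int_{\rr}\zeta(\veps)\,\veps^-\,\rhoe\,da=-\,g_\veps(t)\int_{\rr}\rhoe\,\chiu{\{\veps<0\}}\,da\,;
$$
the boundary term at $a=0$ drops out since $\veps^-(t,0)=0$, and the one at $a=\infty$ because $\rhoe$ has a finite second moment (Lemma~\ref{lem.higher.moments_bounds}) while $\veps^-$ grows at most linearly (as $\veps\in X_{T_0}$).

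To close the identity I would use only sign information. From $\int_{\rr}\zeta(\veps)\veps\rhoe\,da\geq-\int_{\rr}\zeta(\veps)\veps^-\rhoe\,da$ together with $\dt f>0$ (hypothesis~(ii)) one gets $-g_\veps(t)\leq\frac{1}{\muze(t)}\int_{\rr}\zeta(\veps)\veps^-\rhoe\,da$, and this upper bound is itself $\geq0$; since moreover $0\leq\int_{\rr}\rhoe\,\chiu{\{\veps<0\}}\,da\leq\muze(t)$, combining these two bounds shows that the right-hand side of the dissipation identity is at most $\int_{\rr}\zeta(\veps)\veps^-\rhoe\,da$, i.e.\ exactly the dissipative term on the left. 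Hence $\dt\phi\leq0$. Since $\phi\geq0$ and $\phi(0)=\int_{\rr}\rhoi\,\vepsi^-\,da=0$, this forces $\phi\equiv0$ on $(0,T_0)$; by Fubini $\rhoe\,\veps^-=0$ a.e., so $\rhoe\,\veps=\rhoe\,\veps^+\geq0$ a.e., which is the claim.

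The step I expect to be the main obstacle is not any of the estimates above — these close cleanly — but the rigorous justification of the formal manipulations: deriving the transport equation for $\veps^-$ and validating the integration by parts in age at both endpoints (the decay at $a=\infty$ of $\rhoe\,\veps^-$ and its age–derivative). This I would handle as in \cite{MiOel.1,MiOel.2} via the characteristic representation \eqref{rho_model_by_characteristics}, the weak formulation \eqref{equ_rho_weak}, and the smoothing of the positive-part function. The structural reason the argument goes through with no smallness, no boundedness of $\zeta$, and — in contrast to the heuristic computation in the introduction — no \emph{a priori} positivity of $\veps$ itself, is the exact cancellation of $\int_{\rr}\zeta(\veps)\veps^-\rhoe\,da$ between the two sides of the dissipation identity.
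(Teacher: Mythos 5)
Your argument is correct, and it reaches the conclusion by a genuinely different (though closely related) route. The paper's proof is a two-line bookkeeping argument: it invokes the already-established \emph{a priori} estimate \eqref{equ.stab.rho.u} together with the force balance $f(t)=\int_{\rr}\rhoe\veps\,da$ and the compatibility condition $f(0)=\int_{\rr}\rhoi\vepsi\,da$; under hypotheses (i)--(ii) the right-hand side of \eqref{equ.stab.rho.u} collapses to $f(t)=\int_{\rr}\rhoe\veps\,da$, so $\int_{\rr}\rhoe\left(|\veps|-\veps\right)da\le 0$, and nonnegativity of the integrand forces it to vanish a.e. You instead run a fresh dissipation computation on $\phi(t)=\int_{\rr}\rhoe\,\veps^-\,da$. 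Since $|\veps|-\veps=2\veps^-$, your identity is in effect the difference of the two identities the paper combines (the evolution of $\int_{\rr}\rhoe|\veps|\,da$ underlying \eqref{equ.stab.rho.u}, and the evolution of $\int_{\rr}\rhoe\veps\,da$ underlying the force balance), so the key cancellation of $\int_{\rr}\zeta(\veps)\veps^-\rhoe\,da$ that you identify is the same mechanism, just packaged once rather than as a subtraction. What your version buys is self-containedness: you never need the identity $f(t)=\int_{\rr}\rhoe\veps\,da$ nor the compatibility $f(0)=\int_{\rr}\rhoi\vepsi\,da$ (which the paper uses without listing it as a hypothesis), only $\vepsi^-=0$. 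The cost is that you must re-justify the Stampacchia truncation and the integration by parts in age, which the paper sidesteps by citing \eqref{equ.stab.rho.u} as a black box; your case analysis on the sign of $-g_{\veps}$ when bounding $-g_{\veps}\int_{\rr}\rhoe\,\mathbf{1}_{\{\veps<0\}}\,da$ by $\int_{\rr}\zeta(\veps)\veps^-\rhoe\,da$ is sound, and the conclusion $\phi\equiv0$ follows as you state.
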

\begin{proof}
Since it holds that $f(0) = \int_{\rr} \rhoi (a)  \vepsi(a) \, da$ and
$f(t) = \int_{\rr} \rhoe (t,a)  \veps(t,a) \, da$ 
yields
\begin{multline*}
\int_{\rr} \rhoe (t,a) | \veps(t,a) | da \leq 
 \int_{\rr} \rhoi(a) | \vepsi(a) | da + \int_0^t | \dt f(\tit)  | d\tit=\\=
 \int_{\rr} \rhoi(a)  \vepsi(a)  da + \int_0^t  \dt f(\tit)  \, d\tit
=
 f(t) = \int_{\rr} \rhoe (t,a)  \veps(t,a) \, da \, ,
\end{multline*}
which implies the result.
\end{proof}

\begin{lemma}\label{lem.conv}
 Suppose that $\zeta_c$ is a convex locally differentiable function. 
 Then for any function $u \in X_\infty$, one  has : 
 $$
 \zeta_c'(0) \int_{\rr} u \rhoe da \leq \int_{\rr} \zeta_c(u(t,a))\rhoe(t,a) da - \zeta_c(0) \muze,\quad \text{ a.e. } t\in \rr
 $$
\end{lemma}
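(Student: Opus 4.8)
The plan is to obtain the estimate as nothing more than the first-order (tangent line) characterisation of convexity, applied pointwise in the age variable and then integrated against the nonnegative density $\rhoe$.

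First I would invoke convexity of $\zeta_c$: since it is convex and differentiable at $0$, the tangent line at the origin lies below the graph, i.e. $\zeta_c(s) \geq \zeta_c(0) + \zeta_c'(0)\, s$ for every $s \in \RR$. Substituting $s = u(t,a)$ gives the pointwise inequality $\zeta_c(u(t,a)) \geq \zeta_c(0) + \zeta_c'(0)\, u(t,a)$ for a.e. $(t,a)$. Then I would multiply by $\rhoe(t,a)$: the key structural fact is that $\rhoe \geq 0$ a.e., which is exactly the statement of the Lemma following Theorem \ref{prop.rho.exist}, so the inequality direction is preserved and I may integrate over $a \in \rr$. The term $\int_{\rr} \rhoe(t,a)\, da$ equals $\muze(t)$ and the linear term yields $\zeta_c'(0)\int_{\rr} u\, \rhoe\, da$, which is precisely the desired inequality.

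The only point that needs genuine care is the well-definedness of $\int_{\rr}\zeta_c(u)\,\rhoe\, da$, which a priori might equal $+\infty$. I would dispose of this by observing that $u \in X_\infty$ forces at most linear growth in age, $|u(t,a)| \leq (1+a)\,\nrm{u(t,\cdot)\,\omega}{L^\infty_a}$, while Lemma \ref{lem.higher.moments_bounds} provides finiteness of the zeroth and first moments of $\rhoe$; hence the affine minorant $\zeta_c(0)\rhoe + \zeta_c'(0)\,u\,\rhoe$ is integrable in $a$, so the negative part of $\zeta_c(u)\rhoe$ is integrable and the integral is well defined with values in $(-\infty,+\infty]$. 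With that, the claimed inequality is simply monotonicity of the integral applied to the pointwise bound.

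I do not expect any real obstacle here: there is no fixed point, no characteristics, no PDE argument to run — the whole content is the convexity inequality together with the nonnegativity and moment bounds on $\rhoe$ that are already in hand. If one wishes to allow $\zeta_c$ merely convex (only a.e. differentiable, e.g. a two-sided version of $1+|u|$), the same proof goes through verbatim with $\zeta_c'(0)$ replaced by any element of the subdifferential $\partial\zeta_c(0)$, which is nonempty by convexity.
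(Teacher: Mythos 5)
Your proof is correct and takes essentially the same route as the paper: the first-order convexity (tangent-line) inequality $\zeta_c(u)\geq\zeta_c(0)+\zeta_c'(0)u$ at the origin, multiplied by the nonnegative density $\rhoe$ and integrated in age. The only addition is your explicit check that $\int_{\rr}\zeta_c(u)\rhoe\,da$ is well defined via the linear growth of $u\in X_\infty$ and the moment bounds, a point the paper leaves implicit.
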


\begin{proof}
 Since $\zeta_c$ is convex, for almost every $(t,a)$, 
 one has ~:
 $$
 \zeta_c'(0)(u(t,a)-0) \leq \zeta_c(u(t,a))-\zeta(0)
 $$
 and integrating with respect to $\rhoe da$, one has the desired result.
\end{proof}

\begin{proposition}
 Under assumptions \ref{hypo.data.deux} and \ref{ass.beta} and if 
\begin{enumerate}[i)]
\item $\zeta$ satisfies Assumptions \ref{ass.zeta}
and admits a lower convex envelop $\zeta_c$ s.t. $\zeta_c(u)\leq \zeta(u)$ for all $u \in \rr$ with 
$\zeta_c'(0)>0$,
\item let $f$ be a Lipschitz function s.t. $\dt f (t) >0$ for a.e. $t \in (0,T)$, 
\item $f$ and $\beta$ are  s.t. $\bmax<\zeta'_c(0)\fmin$,
\item $\vepsi(a)\geq0$ for a.e. $a\in\rr$, 
\end{enumerate}
then 
if the solution $(\rhoe,\veps)$ solving \eqref{eq.rho.eps}-\eqref{eq.veps}  exists until a finite time $T_0$, 
this time cannot be greater than 
$$
t_0 := \frac{\e}{\bmin+\zeta_c(0)} \ln\left( 1 + \frac{\muze(0)(\bmin+\zeta_c(0))}{\zeta_c'(0)\fmin-\bmax}\right) 
$$
for which
$$
\muze(t_0)\leq 0.
$$
Moreover, on $(0,t_0)\times\rr$,  one has  a lower bound on the profile of $\veps$ namely 
$$
\veps(t,a) \geq \e \gamma_6 \ln\left( 1 + \frac{ \min(t,\e a)}{(t_0-t)} \right),
$$
where $\gamma_6 := t_0 \inf_{t\in(0,t_0)} \dt f / \muze(0)$. 
\end{proposition}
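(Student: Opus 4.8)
\emph{Part 1: the tear-off time $t_0$.} The plan is to close a scalar differential inequality for the total mass $\muze(t)=\int_{\rr}\rhoe(t,a)\,da$. Integrating the transport equation in \eqref{eq.rho.eps} with respect to age (legitimate in the weak/characteristic sense exactly as in Section~\ref{sec_prel_apriori}) and using $\rhoe(t,0)=\beps(1-\muze)$ gives
\begin{equation*}
\e\,\dt\muze-\beps\,(1-\muze)+\int_{\rr}\zeta(\veps)\rhoe\,da=0 .
\end{equation*}
Since $\zeta_c$ is a convex, locally differentiable minorant of $\zeta$, Lemma~\ref{lem.conv} applied to $\zeta_c$, together with $\zeta_c\leq\zeta$, $\rhoe\geq0$ and the force-balance identity $\int_{\rr}\veps\rhoe\,da=f(t)$ (valid for solutions of \eqref{eq.veps}, as in the proof of Theorem~\ref{thm.positivity.veps}), yields
\begin{equation*}
\int_{\rr}\zeta(\veps)\rhoe\,da\ \geq\ \int_{\rr}\zeta_c(\veps)\rhoe\,da\ \geq\ \zeta_c(0)\,\muze+\zeta_c'(0)\,f(t).
\end{equation*}
Substituting, bounding $\beps\leq\bmax$, $f\geq\fmin$, and using $\muze\geq0$ together with $\beps+\zeta_c(0)\geq\bmin+\zeta_c(0)>0$ (the latter because the constant $\ztmin$ is itself a convex minorant of $\zeta$, so $\zeta_c\geq\ztmin$), I obtain
\begin{equation*}
\e\,\dt\muze+\bigl(\bmin+\zeta_c(0)\bigr)\muze\ \leq\ -\bigl(\zeta_c'(0)\fmin-\bmax\bigr),
\end{equation*}
whose right-hand side is a strictly negative constant by hypothesis iii).

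\emph{Part 1, conclusion.} Writing $A:=\zeta_c'(0)\fmin-\bmax>0$ and $B:=\bmin+\zeta_c(0)>0$, the comparison principle gives $\muze(t)\leq\Psi(t)$ on the interval of existence, where $\Psi$ solves $\e\dot\Psi+B\Psi=-A$, $\Psi(0)=\muze(0)$, i.e.\ $\Psi(t)=(\muze(0)+A/B)\,e^{-Bt/\e}-A/B$. This $\Psi$ is convex, strictly decreasing, and vanishes precisely at $t_0=\tfrac{\e}{B}\ln(1+B\muze(0)/A)$, which is the constant in the statement. Since $\muze\geq0$ and the right-hand side of \eqref{eq.veps} is singular where $\muze$ vanishes, the solution cannot persist beyond the time at which $\Psi$ hits zero, so $T_0\leq t_0$ and $\muze$ reaches $0$ no later than $t_0$. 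Moreover, as $\Psi$ is convex on $[0,t_0]$ and coincides at the endpoints with the affine map $\muze(0)(1-t/t_0)$, one gets the linear upper bound $\muze(t)\leq\muze(0)(1-t/t_0)$ on $[0,\min(T_0,t_0)]$, which is the ingredient needed below.

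\emph{Part 2: the profile lower bound on $\veps$.} Under hypotheses ii) and iv), Theorem~\ref{thm.positivity.veps} applies, so $\rhoe\veps\geq0$ a.e.; together with $\zeta>0$ this makes the integral term on the right-hand side of \eqref{eq.veps} nonnegative, hence
\begin{equation*}
\e\,\dt\veps+\da\veps\ \geq\ \frac{\e\,\dt f}{\muze}\ \geq\ \frac{\e\,\inf_{(0,t_0)}\dt f}{\muze(0)\,(1-t/t_0)} .
\end{equation*}
I would then integrate this along the characteristics of $\e\dt+\da$: the characteristic through $(t,a)$ with $a<t/\e$ originates at the boundary point $(t-\e a,0)$ where $\veps=0$, while the one with $a\geq t/\e$ originates at $(0,a-t/\e)$ where $\veps=\vepsi\geq0$; in either case, discarding the nonnegative starting value and changing variables reduces the estimate to $\inf_{(0,t_0)}\dt f\int_{t-\min(t,\e a)}^{t}\tfrac{d\tau}{\muze(\tau)}$. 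Inserting $\muze(\tau)\leq\muze(0)(1-\tau/t_0)$ from Part~1 and computing the elementary integral $\int\tfrac{d\tau}{t_0-\tau}$ produces a lower bound for $\veps(t,a)$ of exactly the stated logarithmic form on $(0,t_0)\times\rr$, with constant $\gamma_6=t_0\,\inf_{(0,t_0)}\dt f/\muze(0)$.

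\emph{Main obstacle.} The only genuinely delicate step is closing the inequality in Part~1: one must combine the age-integrated $\rho$-equation, the force-balance identity $\int_{\rr}\veps\rhoe\,da=f$, and the convexity inequality for the envelope $\zeta_c$, and then track the signs carefully so that the constants emerge exactly as $A=\zeta_c'(0)\fmin-\bmax$ and $B=\bmin+\zeta_c(0)$ — in particular one needs $\zeta_c\geq\ztmin>0$ to guarantee $B>0$, and $\muze\geq0$ to keep the damping term $B\muze$ with the correct sign. Everything afterwards — the scalar ODE comparison, the convexity-to-chord estimate, and the integration along characteristics — is routine, as is the verification that the formal manipulations of \eqref{eq.rho.eps} and \eqref{eq.veps} are legitimate in the weak/characteristic framework of Section~\ref{sec_prel_apriori}.
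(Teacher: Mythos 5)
Your proposal is correct and follows essentially the same route as the paper: integrating \eqref{eq.rho.eps} in age, combining Lemma \ref{lem.conv} with the force balance $\int_{\rr}\veps\rhoe\,da=f$ to close the linear inequality $\e\dt\muze+(\bmin+\zeta_c(0))\muze\leq-(\zeta_c'(0)\fmin-\bmax)$, exploiting convexity of the explicit supersolution to get $\muze(t)\leq\muze(0)(1-t/t_0)$, and then integrating $\e\dt\veps+\da\veps\geq\e\dt f/\muze$ along characteristics using Theorem \ref{thm.positivity.veps}. The only differences are cosmetic (your explicit verification that $\zeta_c(0)>0$, and a harmless bookkeeping of the factor $\e$ in the final logarithmic bound, which the paper's own Duhamel computation also leaves slightly loose).
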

\begin{proof}
By Theorem \ref{thm.positivity.veps} $\veps(t,a) \geq 0$ a.e. $(t,a) \in(0,T_0)\times\rr$.
 The equation for $\muze$ reads :
 $$
 \e \dt \muze - \beta (1-\muze) + \int_{\rr} \zeta(\veps(t,a)) \rhoe(t,a) da = 0
 $$
 that we estimate using Lemma \ref{lem.conv} as follows :
 $$
 \e \dt \muze - \beta (1-\muze)  + \zeta_c'(0) \int_{\rr} \veps(t,a) \rhoe(t,a) da + \zeta_c(0) \muze \leq  0
 $$
 and becomes under these simplifications :
\begin{equation} \label{eq.simp.mu}
  \e \dt \muze - \beta (1-\muze) + \zeta_c(0) \muze + \zeta_c'(0) f \leq 0.
\end{equation}
We can deduce  from this equation that
$$
\e \dt \muze + (\bmin +\zeta_c(0)) \muze \leq \bmax - \zeta_c'(0) \fmin
$$
which gives using Gronwall's Lemma that $ \muze(t) \leq \oomu(t)$, where 
$$
\oomu(t):= \muze(0) \exp\left( - \frac{(\bmin+\zeta_c(0))}{\e}t \right) - \frac{\zeta_c'(0)\fmin-\bmax}{(\bmin+\zeta_c(0))} \left(1- \exp\left( - \frac{(\bmin+\zeta_c(0))}{\e}t \right)\right).
$$
Looking for the time $t_0$ s.t. $\oomu(t_0)=0$ provides  the explicit form of $t_0$ in the claim. Thus $T_0<t_0$. 
Moreover, as $\oomu(t)$ is a convex function
one has that :
$$
\muze(t)\leq \left(1-\frac{t}{t_0}\right)\oomu(0) + \frac{t}{t_0} \oomu(t_0) \equiv \left(1-\frac{t}{t_0}\right)\oomu(0), 
$$
and because,  by Lemma \ref{lem.conv}, $\zeta(\veps) \veps \rhoe $ is positive almost everywhere on $(0,t_0)\times \rr$, 
$$
\e \dt \veps + \da \veps \geq \frac{\e \dt f}{\muze(t)} \geq \frac{\e \gamma_6}{t_0-t} , \quad \text{ a.e in } (0,t_0)\times \rr.
$$ 
Using  Duhamel's formula provides 
$$
\veps(t,a) \geq 
\begin{cases}
 \e \gamma_6 \int_{-a}^0 \frac{ds}{t_0 -(t+\e s)} ds , & \text{if } t\geq \e a ,\\
  \vepsi(a-t/\e) + \e \gamma_6 \int_{-t /\e}^0 \frac{ds}{t_0 -(t+\e s)} ds & \text{ otherwize},
\end{cases}
$$
which then gives the lower estimate on $\veps$. 
\end{proof}

\appendix
\section{Riccati inequalities}

\begin{lemma}\label{lem.ric}
 Let $\e>0$ and real, let $y$ be a positive differentiable function of $t\in \rr$, satisfying 
$$
\left\{
 \begin{aligned}
&  \e \dt y  + A y^2  \leq  B y + C, & t>0, \\
& y(0)=y_0,& t=0 
\end{aligned}
\right.
$$
where $y_0>0$ and $(A,B,C) \in (\rr)^3$. Setting $y_+ := (B+\sqrt{B^2+4 A C} )/(2A)$, one has that
$$
y(t) \leq \max(y_0,y_+),\quad \forall t \in \rr.
$$
\end{lemma}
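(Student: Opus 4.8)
The plan is to prove Lemma~\ref{lem.ric} by a barrier (first--exit--time) argument. Put $M:=\max(y_0,y_+)$; the goal is to show $y(t)\le M$ for all $t\ge0$. The underlying mechanism is simple: as soon as $y$ rises strictly above the larger root $y_+$ of the quadratic on the right--hand side, the Riccati inequality forces $\dt y<0$, so $y$ can never escape above $M$.

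First I would record the elementary algebraic fact behind everything. Since $A>0$ (this is needed for $y_+$ to be defined at all, and is the case in the only application, Proposition~\ref{prop.borne.zt.veps.rhoe}, where $A=\gamma_1>0$), the polynomial $Q(s):=As^2-Bs-C$ is an upward--opening parabola whose largest root is exactly $y_+=(B+\sqrt{B^2+4AC})/(2A)$. Hence $Q(s)>0$, i.e.\ $As^2>Bs+C$, for every $s>y_+$; consequently, at any time $t$ at which $y(t)>y_+$ the hypothesis yields $\e\,\dt y(t)\le By(t)+C-Ay(t)^2=-Q(y(t))<0$.

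Next I would run the exit--time argument proper. Assume, for contradiction, that $y(t_1)>M$ for some $t_1>0$. Because $y$ is continuous and $y(0)=y_0\le M$, the set $S:=\{t\in[0,t_1]:y(t)\le M\}$ is closed, nonempty and does not contain $t_1$; hence $t_0:=\sup S$ satisfies $t_0<t_1$, $t_0\in S$ (so $y(t_0)\le M$), and $y(t)>M\ge y_+$ for every $t\in(t_0,t_1]$. By the previous paragraph $\dt y<0$ throughout $(t_0,t_1]$, so $y$ is strictly decreasing on that interval, and continuity at $t_0$ then gives $y(t_1)<\lim_{t\downarrow t_0}y(t)=y(t_0)\le M$, contradicting $y(t_1)>M$. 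This establishes $y(t)\le M$ for all $t\ge0$.

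I do not anticipate a genuine difficulty here; the two points that deserve a line of care are (i) the sign condition $A>0$, which is what makes ``$s>y_+$'' imply ``$Q(s)>0$'' and hence forces the sign of $\dt y$, and (ii) the topological bookkeeping at the exit time --- closedness of $S$ and continuity of the differentiable function $y$ --- which is precisely what turns the informal phrase ``the first time $y$ reaches $M$'' into a rigorous statement. As an alternative one could instead compare $y$ with the explicit solution $z$ of the autonomous equation $\e\dot z=Bz+C-Az^2$ with $z(0)=M$, which is non--increasing and stays $\le M$; but the barrier argument above avoids having to write $z$ down.
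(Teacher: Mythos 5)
Your proof is correct, but it takes a genuinely different route from the paper's. The paper substitutes $\tilde y := y-m$ with $m:=\max(y_0,y_+)$, uses $-Am^2+Bm+C\le 0$ to derive $\e \dt \tilde y + A\tilde y^2+(2mA-B)\tilde y\le 0$, discards the nonnegative quadratic term and applies Gronwall's lemma to get $\tilde y(t)\le \exp\left(-(2Am-B)t/\e\right)\tilde y(0)\le 0$. You instead run a first--exit--time (barrier) argument: above the larger root $y_+$ of $Q(s)=As^2-Bs-C$ the inequality forces $\dt y<0$, so $y$ cannot cross the level $M=\max(y_0,y_+)$, and the topological bookkeeping at $t_0=\sup S$ makes this rigorous. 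Both arguments hinge on the same two facts --- that $A>0$ and that $M$ sits at or above the larger root of $Q$ --- and both are complete. Your version is more elementary (no Gronwall, no expansion around $m$) and has the merit of making explicit the hypothesis $A>0$, which the statement of the lemma omits (it only asks $(A,B,C)\in\RR^3$) but which both proofs need: the paper uses it silently twice, once to deduce $-Am^2+Bm+C\le 0$ from $m\ge y_+$ and once to drop $A\tilde y^2\ge 0$. The paper's computation has the minor side benefit of yielding an explicit exponential rate at which $y-m$ decays when $2Am>B$, which is not needed for the stated conclusion; your comparison with the autonomous solution $z$, mentioned as an alternative, would recover the same information.
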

\begin{proof}
 We set  $m:=\max(y_0,y_+)$,  it satisfies $-A m^2 + B m + C \leq 0$.
 Then we define $ \ty := y-m$ which then solve the differential inequality :
\begin{equation}\label{eq.ineq}
 \e \dt \ty+ A\ty^2 + (2 m A - B) \ty \leq 0,
\end{equation}
Since the quadratic term is positive we neglect it, and apply Gronwall's Lemma :
$$
\ty(t) \leq \exp\left(-\frac{(2 A m - B )t }{\e} \right) \ty(0) = \exp\left(-\frac{(2 A m - B) t}{\e} \right) \left( y_0 - m\right) \leq 0
$$
which ends the proof.
\end{proof}
\bibliographystyle{plain}
\bibliography{biblio}

\def\cprime{$'$} \def\cprime{$'$}
\begin{thebibliography}{10}

\bibitem{Baumgartner11042000}
W.~Baumgartner, P.~Hinterdorfer, W.~Ness, A.~Raab, D.~Vestweber, H.~Schindler,
  and D.~Drenckhahn.
\newblock Cadherin interaction probed by atomic force microscopy.
\newblock {\em Proceedings of the National Academy of Sciences},
  97(8):4005--4010, 2000.

\bibitem{canetta:inserm-00144609}
E.~Canetta, A.~Duperray, A.~Leyrat, and C.~Verdier.
\newblock {Measuring cell viscoelastic properties using a force-spectrometer:
  influence of protein-cytoplasm interactions.}
\newblock {\em {Biorheology}}, 42(5):321--33, 2005.

\bibitem{pmid8432732}
F.~Gittes, B.~Mickey, J.~Nettleton, and J.~Howard.
\newblock {{F}lexural rigidity of microtubules and actin filaments measured
  from thermal fluctuations in shape}.
\newblock {\em J. Cell Biol.}, 120(4):923--934, Feb 1993.

\bibitem{pmid8282102}
W.~H. Goldmann and G.~Isenberg.
\newblock {{A}nalysis of filamin and alpha-actinin binding to actin by the
  stopped flow method}.
\newblock {\em FEBS Lett.}, 336(3):408--410, Dec 1993.

\bibitem{hanley2003single}
W.~Hanley, O.~McCarty, S.~Jadhav, Y.~Tseng, D.~Wirtz, and K.~Konstantopoulos.
\newblock Single molecule characterization of p-selectin/ligand binding.
\newblock {\em Journal of Biological Chemistry}, 278(12):10556--10561, 2003.

\bibitem{pmid18278037}
S.~A. Koestler, S.~Auinger, M.~Vinzenz, K.~Rottner, and J.~V. Small.
\newblock {{D}ifferentially oriented populations of actin filaments generated
  in lamellipodia collaborate in pushing and pausing at the cell front}.
\newblock {\em Nat. Cell Biol.}, 10(3):306--313, Mar 2008.

\bibitem{pmid12547805}
F.~Li, S.~D. Redick, H.~P. Erickson, and V.~T. Moy.
\newblock {{F}orce measurements of the alpha5beta1 integrin-fibronectin
  interaction}.
\newblock {\em Biophys. J.}, 84(2 Pt 1):1252--1262, Feb 2003.

\bibitem{Li2003}
F.~Li, S.~D. Redick, H.~P. Erickson, and V.~T. Moy.
\newblock Force measurements of the $\alpha$5$\beta$1 integrin–fibronectin
  interaction.
\newblock {\em Biophysical Journal}, 84(2):1252 -- 1262, 2003.

\bibitem{MiOel.1}
V.~Mili{\v{s}}i{\'c} and D.~Oelz.
\newblock On the asymptotic regime of a model for friction mediated by
  transient elastic linkages.
\newblock {\em J. Math. Pures Appl. (9)}, 96(5):484--501, 2011.

\bibitem{MiOel.2}
V.~Mili{\v{s}}i{\'c} and D.~Oelz.
\newblock On a structured model for the load dependent reaction kinetics of
  transient elastic linkages.
\newblock {\em SIAM J. Math. Anal.}, 47(3):2104--2121, 2015.

\bibitem{OeSch}
D.~Oelz and C.~Schmeiser.
\newblock Derivation of a model for symmetric lamellipodia with instantaneous
  cross-link turnover.
\newblock {\em Archive for Rational Mechanics and Analysis}, 198(3):963--980,
  2010.
\newblock cited By 3.

\bibitem{OeSchVi}
D.~Oelz, C.~Schmeiser, and V.~Small.
\newblock Modelling of the actin-cytoskeleton in symmetric lamellipodial
  fragments.
\newblock {\em Cell Adhesion and Migration}, 2:117--126, 2008.

\bibitem{pmid1093925}
Y.~Osterg, S.~V. Noorden, and A.~G. Pearse.
\newblock {{C}ytochemical, immunofluorescence, and ultrastructural
  investigations on polypeptide hormone localization in the islet parenchyma
  and bile duct mucosa of a cyclostome, {M}yxine glutinosa}.
\newblock {\em Gen. Comp. Endocrinol.}, 25(3):274--291, Mar 1975.

\bibitem{PreVi}
L.~Preziosi and G.~Vitale.
\newblock A multiphase model of tumor and tissue growth including cell adhesion
  and plastic reorganization.
\newblock {\em Math. Models Methods Appl. Sci.}, 21(9):1901--1932, 2011.

\bibitem{Suda_2001}
H.~Suda.
\newblock Origin of friction derived from rupture dynamics.
\newblock {\em Langmuir}, 17(20):6045--6047, 2001.

\bibitem{pmid16183875}
M.~Sun, J.~S. Graham, B.~Hegedus, F.~Marga, Y.~Zhang, G.~Forgacs, and
  M.~Grandbois.
\newblock {{M}ultiple membrane tethers probed by atomic force microscopy}.
\newblock {\em Biophys. J.}, 89(6):4320--4329, Dec 2005.

\end{thebibliography}

     \end{document}